\documentclass[12pt,a4paper,reqno]{amsart}

\usepackage{amssymb}
\usepackage{amsmath}
\usepackage[all]{xy}
\usepackage{latexsym}
\usepackage{amsthm}
\usepackage{a4wide}
\usepackage{color}
\usepackage{hyperref}
\input diagxy

\theoremstyle{definition}
\newtheorem{definition}{Definition}[section]
\newtheorem{remark}[definition]{Remark}
\theoremstyle{plain}
\newtheorem{lemma}[definition]{Lemma}
\newtheorem{theorem}[definition]{Theorem}
\newtheorem{example}[definition]{Example}
\newtheorem{corollary}[definition]{Corollary}
\newtheorem{proposition}[definition]{Proposition}
\newtheorem{question}[definition]{Question}

\begin{document}

\title{Categorically Proper Homomorphisms of Topological Groups}

\author{Wei He}
\address{Wei He\\Institute of Mathematics, Nanjing Normal University, Nanjing 210046, China}
\email{weihe@njnu.edu.cn}

\author{Walter Tholen}
\address{Walter Tholen\\Department of Mathematics and Statistics, York University, Toronto, Ontario, M3J 1P3, Canada}
\email{tholen@mathstat.yorku.ca}

\begin{abstract}
We extend the Dikranjan-Uspenskij notions of c-compact and h-complete topological group to the morphism level, study the stability properties of the newly defined types of maps, such as closure under direct products, and compare them with their counterparts in topology. We assume Hausdorffness only when our proofs require us to do so, which leads to new results and the affirmation of some facts that were known in a Hausdorff context.
\end{abstract}

\keywords{c-compact group, h-complete group, c-proper map, c-perfect map,  c-complete map, h-complete map}\subjclass{22A05, 54H11, 18B99}
\thanks{The first author acknowledges the support of NSFC (11171156, 11571175) and NSERC (501260). The second author acknowledges partial financial assistance by NSERC}
\date{}

\dedicatory{Dedicated to the memory of Horst Herrlich}

\maketitle

\section{Introduction}

Compactness of a topological space $X$ may be characterized by the property that the projection $X\times Z\to Z$ maps closed subspaces onto closed subspaces, for every space $Z$. Following the early categorical treatments of compactness in \cite{Herrlich, Penon, Manes, HSS} that are largely based on this essential property, the categorical development of closure operators (see \cite{DG, DT}) naturally led to a compelling notion of compact object in a category equipped with a closure operator (see \cite{Cas, DG2, CGT, CGT2}) or, more generally, with an axiomatically given class of ``closed morphisms" as in \cite{Tholen99, CGT2}. Pursuing this approach in the category of topological groups, Dikranjan and Uspenskij \cite{DU} called an object $G$ {\em c(ategorically) compact} if the projection $G\times K\to K$ is {\em c-closed, i.e.,} maps closed sub{\em groups} onto closed subgroups.
While (topologically) compact groups are trivially c-compact, and while various relevant additional properties (such as Abelianess) guarantee the validity of the converse statement (see \cite{DU, L}), its failure in general was shown only fairly recently in \cite{KOO}.

One of the main advantages of the categorical treatment of compactness in topology is the fact that it makes precise that
the notion of proper map (as promoted by \cite{Bourbaki}, often called perfect in general topology -- see \cite{En}) is
simply the fibred version of compactness, so that any categorically provable statement for compactness leads to a statement on proper
maps, and conversely: $f:X\to Y$ is proper if, and only if, $f$ as an object in the category of fibred spaces over $Y$, is compact; that is, if the projection $X\times_Y Z\to Z$ is closed, for every continuous map $g:Z \to Y$; equivalently, if $f\times 1_Z:X\times Z\to Y\times Z$ is closed for all spaces $Z$ (\cite{Bourbaki}), or if $f$ is closed and has compact fibres (\cite{En}). But do these equivalent formulations remain valid if we carry the categorical notion of properness
from ${\bf Top}$ to the category ${\bf TopGrp}$ of topological groups?

To answer these and further questions, in this paper we study the notion of {\em c(ategorical) properness} in the category {\bf TopGrp}, including its stability under arbitrary direct products, and compare it with the weaker notions
of {\em c-completeness} and {\em h(omomorphical) completeness} as introduced here at the morphism level as well, in generalization of the object notion of h-completeness studied in \cite{DU,L}. To derive product stability of c-proper and h-complete maps, we extend the characterizations of these notions in terms of convergence of special types of filters as given in \cite{DU, L} from the object to the morphism level. In addition,  we give an alternative proof for the product stability of c-proper maps based on the categorical Tychonoff Theorem \cite{CT} that had already been used
to affirm product stability of c-compactness for topological groups: see Example 9.5 of \cite{CGT}.

In fact, we not only extend but slightly generalize the known object-level results since, unlike the authors
of \cite {DU, L}
 and of most papers on topological groups, we do {\em not} assume {\em a priori} that the topologies of our objects must be Hausdorff. The reason for this, however, is not the aim for generalization {\em per se} but rather, we find that keeping track of where Hausdorffness plays a role adds to the clarity of proofs. In doing so, we follow the categorical literature which clearly
shows that compactness and separation properties are in many instances equal partners that mutually enhance each other. Of course, for maps Hausdorffness has to be understood in the fibred sense (see \cite{J}), and following
\cite{Tholen99, CGT2} we reserve the name {\em c-perfect} for c-proper maps that are also Hausdorff.

Section 2 presents the principal notion of this paper, c-proper map in {\bf TopGrp} and, based on the general categorical theory presented in \cite{Tholen99}, establishes its basic properties and interactions with c-compact objects and Hausdorffness. We highlight in particular the fact that, for maps
$f:G\to H$ and $ g:K\to H$ in {\bf TopGrp} with $f$ c-proper, c-compactness of $K$ implies c-compactness
of $G\times_H K$ (Corollary \ref{pullback ascent}); however, very unlike the situation in ${\bf Top}$,
a c-closed map $f$ satisfying this stability property for c-compactness for all $g$ may still fail to be c-proper (Example \ref{first example}).
Section 3 follows the lead of Section 2 in introducing h-complete and c-complete maps in {\bf TopGrp} and studying their stability properties. Employing an example given by Shelah \cite{SS} under CH, we show that, unlike for c-proper
maps, both types of maps may fail to be stable under composition (Corollary \ref{non-compositivity}).
We were, however, not able to determine whether the categorically preferable notion of c-completeness is truly stronger than that of h-completeness -- but conjecture that it is.

Having extended the existing filter characterizations for c-compact and h-complete objects as given in \cite{DU,L} to morphisms in Section 4, always keeping carefully track of, and trying to minimize, the assumption of Hausdorffness,
in Section 5 we reap the benefits and formulate the resulting product stability for c-proper and for h-complete maps,
thus establishing the {\bf TopGrp} counterparts for the classical Frol\'ik-Bourbaki \cite{Frolik, Bourbaki} and Chevalley-Frink \cite{CF} theorems in topology. However, our filter characterizations require that the codomains of the participating maps be Hausdorff. For product stability of c-proper maps, we were able to avoid this (however small) restriction, by applying the categorical results of \cite{CT, CGT} and thus demonstrating their power in the current context; see Corollary \ref{Frolik categorical}.

\section{Categorically closed, proper and perfect maps}

Throughout this paper we consider the category $\bf{TopGrp}$ of ({\em not necessarily Hausdorff}) topological groups and their continuous homomorphisms. Hence, {\em $G, H, K, ...$ will always denote topological groups,
and when we write $f:G \to H, g:H \to K, ...$, it is always assumed that $f, g, ...$ are continuous homomorphisms. In Section {\rm 4} we will also consider not necessarily continuous homomorphisms; they will always be denoted by Greek letters:
$\varphi: G \to H, \psi: H \to K, ...$, and when they are assumed to be continuous, we will say so explicitly. $A \le G$ means that $A$ is a subgroup of $G$ provided with the subspace topology. The identity map of $G$ is denoted by $1_G$, while $e_G$ denotes the neutral element of $G$.}

Whenever Hausdorffness is assumed, we will say so explicitly. Topological groups whose topology is Hausdorff are called {\em Hausdorff groups}. We remind the reader that $f: G \to H$ is {\em(fibrewise) Hausdorff} (see \cite{J}) if distinct points of the same fibre of $f$ may be separated by disjoint open neighbourhoods in $G$; equivalently, if the diagonal map $\delta_{f}: G \to G \times_H G$
is closed. Trivially, $G$ is Hausdorff if, and only if, $G \to 1$ is Hausdorff, with $1$ denoting a trivial group. Also, any $f:G\to H$ that is injective or has Hausdorff domain $G$ is Hausdorff as a map.

\begin{definition} \label{basic definition}
(1) $f: G \to H$ is {\em categorically closed} ({\em c-closed}, for short), if $f(A) \le H$ is closed for all closed $A \le G$; equivalently, if taking images of subgroups under $f$ commutes with taking closures: $f(\overline{A})=\overline{f(A)}$ for all $A \le G$.

(2) $f: G \to H$ is {\em categorically proper} ({\em c-proper}) if $f$ is stably c-closed, that is: if in every pullback diagram
\[
\xymatrix{{G \times_HK} \ar[r]^{p_{2}}\ar[d]_{p_{1}}& K
\ar[d]^{g}\\
      G  \ar[r]^{f}& H}
\]
the projection $p_2$ is c-closed.

(3) $f:G\to H$ is {\em categorically perfect} ({\em c-perfect}) if $f$ is c-proper and Hausdorff.

(4) (\cite{DU}) $G$ is {\em categorically compact} ({\em c-compact}) if $G \to 1$ is c-proper; equivalently: if the projection $p_2: G \times K \to K$ is c-closed, for every $K$.

\end{definition}

\begin{remark} \label{first remark}
(1) Obviously, every (topologically) closed morphism is c-closed, and every (topologically) proper (i.e., closed, with compact fibres) morphism is c-proper. In particular, every compact group is c-compact.

(2) The existence of a c-compact group $G$ that fails to be compact (see \cite{KOO}) shows that none of the implications of (1) is reversible. But we recall from Theorem 3.7 in \cite{DU} that an Abelian c-compact Hausdorff group must be compact. Example 2.1 below presents a c-closed morphism of Abelian Hausdorff groups that fails to be c-proper.

(3) When $f: G \to H$ is a topological embedding, so that $G\cong f(G) \le H$, then c-closedness of $f$ is equivalent to its closedness, i.e., to $f(G)\le H$ being closed.

(4) The notion of c-compactness as given in Definition \ref{basic definition}(4) is formally stronger than the one defined
in \cite{DU} and used in \cite{L}, even when the group $G$ is Hausdorff, since in our definition $K$ ranges over {\em all} topological groups, including those that are not Hausdorff. However, we will show in Corollary \ref{equivalence with DU} below that it suffices to consider Hausdorff groups $K$ in Definition \ref{basic definition}(4), so that the definition given here is in fact equivalent to the Dikranjan-Uspenskij notion when $G$ is Hausdorff.
\end{remark}

Let us first observe the following easy, but fundamental properties of the class $\mathcal F$ of c-closed morphisms:

(F0) {\em every isomorphism (in $\bf{TopGrp}$) is in $\mathcal F$;}

(F1) {\em if $f:G\to H, g:H \to K$ are in $\mathcal F$, so is $g \cdot f$;}

(F2) {\em for $g$ injective, if $g \cdot f$ is in $\mathcal F$, so is $f$;}

(F3) {\em for $f$ surjective, if $g \cdot f$ is in $\mathcal F$, so is $g$.}

For {\em any} class $\mathcal{F}$ of morphisms in $\bf TopGrp$, we say that $\mathcal{F}$ satisfies the {\em basic stability properties (BSP)} if (F0)--(F3) hold.
For example, the classes of open morphisms and of (topologically) closed morphisms both satisfy BSP. With the trivial observations that surjectivity is stable under pullback and that for $g$ injective
\[
\xymatrix{{M} \ar[r]^{1_{M}}\ar[d]_{h}& M
\ar[d]^{g\cdot h}\\
      G  \ar[r]^{g}& H}
\]
is a pullback diagram, pullback pasting easily gives that, for any $\mathcal{F}$ satisfying BSP, the class $\mathcal{F}^*$ of all those morphisms belonging stably to $\mathcal{F}$ also satisfies BSP. Since for $\mathcal{F}$ the class of c-closed morphisms $\mathcal{F}^*$ is the class of c-proper morphisms, this means in particular:

\begin{proposition} \label{c-proper BSP}
The class of c-proper morphisms satisfies the basic stability properties.
\end{proposition}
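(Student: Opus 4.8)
The plan is to prove Proposition \ref{c-proper BSP} by verifying that the class $\mathcal F$ of c-closed morphisms satisfies (F0)--(F3), and then to invoke the general categorical observation (already stated in the excerpt) that for any $\mathcal F$ satisfying BSP, the class $\mathcal F^*$ of morphisms belonging stably to $\mathcal F$ again satisfies BSP; since $\mathcal F^*$ is precisely the class of c-proper morphisms, this gives the result. So the real content is the four items (F0)--(F3) for c-closedness, plus making the passage to $\mathcal F^*$ fully explicit.

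For (F0), an isomorphism $f$ is in particular a homeomorphism and a group isomorphism, so it carries closed subgroups bijectively to closed subgroups; hence it is c-closed. For (F1), if $f\colon G\to H$ and $g\colon H\to K$ are c-closed and $A\le G$ is closed, then $f(A)\le H$ is closed, so $g(f(A))=(g\cdot f)(A)\le K$ is closed; thus $g\cdot f$ is c-closed. For (F2), suppose $g$ is injective and $g\cdot f$ is c-closed; for closed $A\le G$ we have $(g\cdot f)(A)\le K$ closed, and since $g$ is an injective continuous homomorphism, $f(A)=g^{-1}\bigl((g\cdot f)(A)\bigr)$ is closed in $H$ (here one uses continuity of $g$ to pull back the closed subgroup $(g\cdot f)(A)$; injectivity ensures $g^{-1}((g\cdot f)(A))=f(A)$ rather than a larger set), so $f$ is c-closed. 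For (F3), suppose $f$ is surjective and $g\cdot f$ is c-closed; for closed $B\le H$, surjectivity of $f$ gives $g(B)=g(f(f^{-1}(B)))=(g\cdot f)(f^{-1}(B))$, and $f^{-1}(B)\le G$ is closed by continuity of $f$, hence $(g\cdot f)(f^{-1}(B))\le K$ is closed, i.e.\ $g(B)$ is closed; so $g$ is c-closed. The only mild subtlety throughout is bookkeeping about images and preimages of \emph{subgroups}: since $f,g$ are homomorphisms, images and preimages of subgroups are subgroups, so all the set-theoretic identities above are also identities of subgroups, and the arguments go through verbatim.

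With (F0)--(F3) established for $\mathcal F$, I would then spell out the passage to $\mathcal F^*$ exactly as indicated in the text: surjectivity is pullback-stable, and for $g$ injective the square with top edge $1_M$, left edge $h$, bottom edge $g$ and right edge $g\cdot h$ is a pullback; combining these two facts with pullback pasting (two adjacent pullback squares compose to a pullback, and the outer-and-right squares of such a configuration force the left square to be a pullback) one checks (F0)--(F3) for $\mathcal F^*$ from (F0)--(F3) for $\mathcal F$. Concretely: (F0) for $\mathcal F^*$ because every pullback of an isomorphism is an isomorphism, which lies in $\mathcal F$ by (F0) for $\mathcal F$; (F1) for $\mathcal F^*$ because any pullback of $g\cdot f$ decomposes, via pullback pasting, into a pullback of $g$ followed by a pullback of $f$, each of which is in $\mathcal F$, so their composite is in $\mathcal F$ by (F1) for $\mathcal F$; (F2) for $\mathcal F^*$ using the displayed pullback square to realize $h$ itself as a pullback of $g\cdot h$ along $g$ (together with pullback-stability to handle an arbitrary base change), then applying (F2) for $\mathcal F$; and (F3) for $\mathcal F^*$ because a pullback of $g$ can be completed, using surjectivity of the pullback of $f$, to the right-hand square of a pullback-pasting diagram whose composite is a pullback of $g\cdot f\in\mathcal F^*$, whence (F3) for $\mathcal F$ applies.

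I do not expect a genuine obstacle here: the statement is a formal consequence of elementary properties of continuous homomorphisms together with standard pullback-pasting manipulations, and the excerpt has already sketched the $\mathcal F\rightsquigarrow\mathcal F^*$ step. The one place demanding a little care is (F2) and (F3) at the $\mathcal F^*$ level, where one must correctly identify the relevant square in a base-changed situation as (part of) a pullback-pasting diagram; this is routine diagram chasing but should be written out rather than merely asserted. Everything else is immediate, so the proof will be short.
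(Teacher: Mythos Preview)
Your proposal is correct and follows exactly the approach the paper takes: the paper states (F0)--(F3) for c-closed maps as ``easy, but fundamental'' without further argument, then sketches the passage $\mathcal F\rightsquigarrow\mathcal F^*$ via pullback pasting (using pullback-stability of surjections and the displayed pullback square for injective $g$) in the paragraph immediately preceding the proposition, leaving the proposition itself without a formal proof environment. Your write-up simply fills in the details the paper omits; the only minor over-elaboration is in (F2) for $\mathcal F^*$, where the pulled-back $g'$ is actually an identity, so invoking (F2) for $\mathcal F$ is harmless but unnecessary.
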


\begin{corollary} \label{necessary condition}
For $f:G\to H$ c-proper and any $K$ also $f\times 1_K:G\times K \rightarrow H\times K$ is c-proper.
\end{corollary}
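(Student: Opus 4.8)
The plan is to realise $f\times 1_K$ as a pullback of $f$ and then to invoke the pullback-stability that is built into the definition of c-properness. First I would take, in the pullback square of Definition \ref{basic definition}(2), the map $g$ to be the first projection $\pi\colon H\times K\to H$. Its pullback along $f$ is $G\times_H(H\times K)=\{(x,(h,k)) : f(x)=h\}$, and the assignment $(x,k)\mapsto (x,(f(x),k))$ defines a homomorphism $G\times K\to G\times_H(H\times K)$ which is a continuous isomorphism of topological groups, with continuous inverse $(x,(h,k))\mapsto (x,k)$; under this isomorphism the projection $p_2\colon G\times_H(H\times K)\to H\times K$ is carried precisely to $f\times 1_K\colon G\times K\to H\times K$. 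Checking that this bijective homomorphism is a homeomorphism amounts to recalling that the pullback $G\times_H(H\times K)$ carries the subspace topology of the product, and is routine.

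With that identification in hand, the statement follows formally. By construction the class of c-proper morphisms is the class $\mathcal F^{*}$ of morphisms that belong stably to the class $\mathcal F$ of c-closed morphisms, and, as already used in the passage preceding Proposition \ref{c-proper BSP}, pullback pasting shows that any pullback of a pullback of $f$ is again a pullback of $f$; hence $\mathcal F^{*}$ is stable under pullback. Since $f$ is c-proper and $p_2$ is a pullback of $f$, we conclude that $p_2$, and therefore $f\times 1_K$, is c-proper. (Equivalently, one may simply observe that every further pullback of $p_2$ is a pullback of $f$, hence c-closed.)

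I do not expect any genuine obstacle: the corollary is a purely formal consequence of Proposition \ref{c-proper BSP}. The only point requiring a moment's care is the verification that the bijection $G\times K\cong G\times_H(H\times K)$ is an isomorphism in $\bf TopGrp$ — that is, that the topology on the pullback is indeed the one making the above map a homeomorphism — which is immediate from the way limits are formed in $\bf TopGrp$.
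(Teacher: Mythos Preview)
Your argument is correct and is exactly the paper's approach: the paper's proof consists of the single sentence ``$f\times 1_K$ is a pullback of $f$,'' which is precisely what you verify in detail, and the conclusion then follows from the pullback-stability built into the definition of c-properness.
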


\begin{proof}
$f\times 1_K$ is a pullback of $f$.
\end{proof}

Another useful consequence of BSP for c-proper maps is the following:

\begin{corollary} \label{induced arrow}
For  $f: G \rightarrow H$ c-proper
and any $g: G \rightarrow K$ with $K$ Hausdorff, the induced
morphism
  $\langle f, g\rangle: G \rightarrow H\times K$ is also c-proper.
  \end{corollary}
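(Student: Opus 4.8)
The plan is to factor $\langle f,g\rangle\colon G\to H\times K$ through the graph-style construction and then combine two facts: (i) the class of c-proper morphisms satisfies BSP (Proposition \ref{c-proper BSP}), in particular it is closed under composition (F1) and under pullback, and (ii) any morphism with Hausdorff domain — more to the point, the relevant diagonal-type morphism — is c-proper because it is a (topologically) closed embedding. Concretely, I would write $\langle f,g\rangle = (f\times 1_K)\cdot\langle 1_G,g\rangle$, where $\langle 1_G,g\rangle\colon G\to G\times K$ is the graph map of $g$. The second factor $f\times 1_K$ is c-proper by Corollary \ref{necessary condition}, since $f$ is c-proper. So everything reduces to showing that the graph map $\langle 1_G,g\rangle\colon G\to G\times K$ is c-proper.

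For the graph map, the first step is to observe that $\langle 1_G,g\rangle$ is a topological embedding: it is split monic, with retraction the first projection $p_1\colon G\times K\to G$, and it carries the subspace topology from $G\times K$. By Remark \ref{first remark}(3), c-closedness of an embedding is the same as closedness of its image, so it suffices to check that the image subgroup $\Gamma_g=\{(x,g(x)):x\in G\}\le G\times K$ is closed in $G\times K$, and moreover that this remains true stably (under pullback). Here is where Hausdorffness of $K$ enters: $\Gamma_g$ is exactly the equalizer of the two maps $g\cdot p_1,\ p_2\colon G\times K\to K$, i.e. the preimage of the diagonal $\Delta_K\le K\times K$ under $(g\cdot p_1,p_2)$; since $K$ is Hausdorff, $\Delta_K$ is closed in $K\times K$, hence $\Gamma_g$ is closed in $G\times K$. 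Thus $\langle 1_G,g\rangle$ is a closed embedding, in particular a (topologically) proper map, hence c-proper by Remark \ref{first remark}(1) — and c-properness is automatically stable, being defined by a pullback condition, so no separate "stable" argument is needed once we know $\langle 1_G,g\rangle$ is c-proper.

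Finally, compose: $\langle f,g\rangle = (f\times 1_K)\cdot\langle 1_G,g\rangle$ is a composite of two c-proper maps, hence c-proper by (F1) of Proposition \ref{c-proper BSP}. The only real subtlety — and the step I would double-check most carefully — is the claim that $\langle 1_G,g\rangle$ is a \emph{closed} embedding rather than merely an embedding; this is precisely what forces the hypothesis that $K$ be Hausdorff (if $K$ is not Hausdorff, $\Delta_K$ need not be closed and $\Gamma_g$ need not be closed, so this argument breaks down). An alternative, more self-contained route that avoids invoking topological properness: show directly that $\langle 1_G,g\rangle$ is stably c-closed by noting that any pullback of it is again a graph map of a continuous homomorphism into (a subgroup of) $K$, which is still Hausdorff, so the same closed-equalizer argument applies fibrewise; but the composition-with-$f\times 1_K$ packaging above is cleaner and I would present it that way.
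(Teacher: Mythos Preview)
Your proposal is correct and follows essentially the same approach as the paper's proof: factor $\langle f,g\rangle$ as $(f\times 1_K)\cdot\langle 1_G,g\rangle$, observe that $f\times 1_K$ is c-proper by Corollary \ref{necessary condition}, that $\langle 1_G,g\rangle$ is a closed embedding since $K$ is Hausdorff, and conclude via (F1). Your write-up simply fills in more detail (the equalizer argument for closedness of the graph) than the paper, which states the closed-embedding fact without further comment.
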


  \begin{proof}
  $\langle f, g\rangle$ factors as
  \[G \stackrel{\langle 1_{G}, g\rangle}
  {\longrightarrow} G \times K \stackrel{f \times 1_{K}}{\longrightarrow}
  H\times K\]
 with $f\times 1_K$ c-proper by Corollary 2.1 and $\langle 1_{G}, g\rangle$ a closed embedding since $K$ is Hausdorff.
 \end{proof}

We are also ready to confirm that c-propriety of $f:G\to H$ may be characterized in the following more familiar and easy form whenever $H$ is Hausdorff:

\begin{proposition} \label{traditional criterion}
Let $H$ be Hausdorff. Then $f:G\to H$ is c-proper if, and only if, $f\times 1_K: G\times K \to H\times K$ is c-closed, for every topological group $K$.
\end{proposition}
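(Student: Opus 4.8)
The forward implication needs no separation hypothesis at all: if $f$ is c-proper, then $f\times 1_K$ is c-proper by Corollary \ref{necessary condition}, and in particular c-closed, for every $K$. So the whole content of the statement is the converse, and this is where Hausdorffness of $H$ will be used.

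For the converse I would start from an arbitrary pullback square as in Definition \ref{basic definition}(2), built over a homomorphism $g:K\to H$, and try to deduce c-closedness of $p_2$ from c-closedness of $f\times 1_K$ alone. The key observation is that, because $H$ is Hausdorff, the graph homomorphism $m:=\langle g,1_K\rangle:K\to H\times K$ is a \emph{closed} embedding: its image $\{(h,k):h=g(k)\}$ is the equalizer of the two homomorphisms $p_1,\ g\cdot p_2:H\times K\to H$ into the Hausdorff group $H$, hence closed in $H\times K$, while $m$ itself is a topological embedding onto that image. Identifying $G\times_H K$ with the subgroup $\{(x,k)\in G\times K:f(x)=g(k)\}$ of $G\times K$, one checks directly that the inclusion $\bar m:G\times_H K\hookrightarrow G\times K$ is precisely the pullback of $m$ along $f\times 1_K$; hence $\bar m$ is again a closed embedding (pullbacks of closed embeddings being closed embeddings), and the pullback square yields $(f\times 1_K)\cdot\bar m = m\cdot p_2$.

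The rest is a short diagram chase. Given a closed subgroup $A\le G\times_H K$, closedness of the embedding $\bar m$ gives that $\bar m(A)$ is closed in $G\times K$; c-closedness of $f\times 1_K$ then makes $(f\times 1_K)(\bar m(A))=m(p_2(A))$ a closed subgroup of $H\times K$; and since $m$ is a closed embedding, a subgroup of $K$ is closed in $K$ exactly when its $m$-image is closed in $H\times K$, so $p_2(A)$ is closed in $K$. As the pullback square was arbitrary, $f$ is c-proper. The only place the hypothesis enters — and the step on which everything hinges — is the closedness of the graph of $g$ in $H\times K$; the remaining manipulations are purely formal pullback bookkeeping, which is exactly why one expects the equivalence to break down when $H$ is not Hausdorff.
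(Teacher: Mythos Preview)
Your proof is correct and follows essentially the same route as the paper's: both decompose the pullback square via the closed graph embedding $\langle g,1_K\rangle:K\to H\times K$ (closed because $H$ is Hausdorff) and its pullback $G\times_H K\hookrightarrow G\times K$, then use the commutativity $(f\times 1_K)\cdot\bar m = \langle g,1_K\rangle\cdot p_2$. The only cosmetic difference is that the paper invokes the abstract cancellation properties (F1) and (F2) for c-closed maps, whereas you unpack the same reasoning by chasing a specific closed subgroup $A$ through the diagram; the underlying argument is identical.
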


\begin{proof}
Any pullback diagram as in Definition \ref{basic definition}(2) may be decomposed as
\[\bfig
\square<500,400>[{G\times_H K}`K`{G\times K}`{H\times K};{p_2}`i`{\langle g,1_K\rangle}`{f\times 1_K}]
\square(0,-400)<500,400>[{G\times K}`{H\times K}`G`H;`{}`{}`f]
\efig
\]
where the embedding $i$ is closed since $H$ is Hausdorff, and where $f\times 1_K$ is c-closed by hypothesis. Since $\langle g,1_K\rangle$ is injective, with (F1), (F2) one concludes that $p_2$ is c-closed, as desired.
\end{proof}

For purely categorical reasons one has the same interplay between the object notion of c-compactness and the morphism notion of c-propriety as one has for the corresponding topological notions, i.e, when ``c-" is removed. We repeat the argumentation given in \cite{Tholen99} in the current environment:

\begin{theorem} \label{c-compact}
The following assertions for a topological group $G$ are equivalent:
\begin{itemize}
\item[\rm (i)] $G$ is c-compact;
\item[\rm (ii)] every $f:G \to H$ with $H$ Hausdorff is c-proper;
\item[\rm (iii)] there is a c-proper $f:G \to H$ with $H$ c-compact.
\end{itemize}
\end{theorem}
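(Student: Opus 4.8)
The plan is to follow the standard categorical pattern that makes ``compact object'' and ``proper morphism'' two faces of the same coin, exactly as in \cite{Tholen99}, and to check that the only topological input needed -- that the terminal object $1$ is Hausdorff and that Hausdorff maps compose and pull back sensibly -- is available in $\mathbf{TopGrp}$.

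First I would prove (i)$\Rightarrow$(ii). Assume $G$ is c-compact, i.e. $G\to 1$ is c-proper, and let $f:G\to H$ with $H$ Hausdorff be given. Factor $f$ as $G\xrightarrow{\langle 1_G,f\rangle} G\times H\xrightarrow{p_2} H$. Since $H$ is Hausdorff, the graph map $\langle 1_G,f\rangle$ is a closed embedding (its image is the equalizer of the two composites $G\times H\rightrightarrows H$, which is closed because the diagonal of $H$ is closed), hence c-closed, and in fact c-proper: a closed embedding is stably c-closed because embeddings and closedness are pullback-stable, and injectivity gives (F2) in every pullback square. The projection $p_2:G\times H\to H$ is a pullback of $G\to 1$ along $H\to 1$, hence c-proper by Proposition~\ref{c-proper BSP} (c-propriety is by definition pullback-stable). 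Composing two c-proper maps with (F1) applied stably, $f=p_2\cdot\langle 1_G,f\rangle$ is c-proper. (This is essentially the argument already packaged in Corollary~\ref{induced arrow}, applied with the map $G\to 1$ in place of a c-proper $f$.)

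Next, (ii)$\Rightarrow$(iii) is immediate: take $H=1$, which is a Hausdorff, c-compact group, and the resulting $f:G\to 1$ is c-proper by (ii). Finally, for (iii)$\Rightarrow$(i), suppose $f:G\to H$ is c-proper with $H$ c-compact, i.e. $H\to 1$ is c-proper. Then $G\to 1$ is the composite $G\xrightarrow{f}H\to 1$ of two c-proper maps, hence c-proper by Proposition~\ref{c-proper BSP}; that is, $G$ is c-compact. The equivalence with the ``projection'' formulation of c-compactness in Definition~\ref{basic definition}(4) is already built into that definition.

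The only genuinely delicate point is the claim, used in (i)$\Rightarrow$(ii), that for Hausdorff $H$ the graph map $\langle 1_G,f\rangle$ is not merely a closed embedding but \emph{c-proper} (stably c-closed), so that it may be composed with the c-proper $p_2$. I expect this to be the main thing to get right: one must observe that pulling back $\langle 1_G,f\rangle$ along any $g:K\to G\times H$ yields again a closed embedding (the pullback of an embedding is an embedding, and the pullback of a closed subobject along a continuous map is closed), and that such a map is c-closed simply because its (closed) image is a closed subgroup and it is injective. Since $H$ is Hausdorff this graph is closed; without Hausdorffness of $H$ the argument breaks, which is exactly why (ii) and the factorization route need the hypothesis on the codomain -- and it is worth remarking, as the paper's philosophy demands, that no Hausdorffness on $G$ or $K$ is used anywhere.
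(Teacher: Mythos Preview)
Your proof is correct and follows essentially the same route as the paper's: factor $f$ through its graph, observe that the graph map is a closed embedding (hence c-proper) when $H$ is Hausdorff, that $p_2$ is c-proper as a pullback of $G\to 1$, and compose via (F1); then handle (ii)$\Rightarrow$(iii) with $H=1$ and (iii)$\Rightarrow$(i) by composing $G\to H\to 1$. The paper's version is terser---it just writes the factorization and says ``apply (F1)''---while you spell out in detail why the graph map is \emph{stably} c-closed; that extra justification is correct and worth including, though your invocation of (F2) there is a slight red herring: the real reason is simply that the pullback of a closed embedding is again a closed embedding, and any closed embedding is c-closed.
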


\begin{proof}
For (i)$\Rightarrow$(ii) factor $f$ through its graph as  in
\[
\xymatrix{
 G \ar[rr]^{<1_G,f>}\ar[dr]_<<<<<{f}& & {G \times H} \ar[dl]^<<<<<{p_2}\\
 & H  &}
\]
and apply (F1). For (ii)$\Rightarrow$(iii) observe that $H=1$ is c-compact Hausdorff, and for (iii)$\Rightarrow$(i) apply (F1) to the trivial diagram
\[
\xymatrix{
 G \ar[rr]^{f}\ar[dr]_<<<<<{}& & {H} \ar[dl]^<<<<<{}\\
 & 1  &}
\] \end{proof}

\begin{remark} \label {categorical proof}
(1) As the proof above relies only on the fact that the relevant class $\mathcal{F}$ satisfies (F0) and (F1), the argumentation carries through, for example, also for (topologically) closed morphisms and for Hausdorff morphisms.

(2) More importantly, we may apply (the categorical version of) Theorem \ref{c-compact} to the category $\bf{TopGrp}$$/K$ of topological groups over the fixed topological group $K$, noting that the class of c-closed morphisms over $K$ still satisfies BSP. One may then regard $h:G \to K$ in $\bf{TopGrp}$ also as the unique morphism in $\bf{TopGrp}$$/K$ from $(G,h)$ to the terminal object $(K,1_K)$ in $\bf{TopGrp}$$/K$. Hence, $h$ is c-proper in $\bf{TopGrp}$ if, and only if, $(G,h)$ is c-compact in $\bf{TopGrp}$$/K$. Consequently, as previously stated in the abstract context in \cite{Tholen99}, we obtain the fibred version of Theorem 2.1 (i)$\Rightarrow$(ii) as given in (1) of the next Corollary, which strengthens considerably the assertion (F2) for c-proper maps, as follows:

\end{remark}

\begin{corollary} \label{Hausdorff cancellation}
{\rm (1)} If for $f:G \to H$ and $g:H \to K$ the composite $g\cdot f$ is c-proper and $g$ is Hausdorff, then  $f$ is c-proper.

{\rm (2)} Let $A\le G$ be dense, with $G$ Hausdorff. Then a c-proper morphism $h:A \to H$ cannot be extended to a morphism $G\to H$, unless $A=G$.
\end{corollary}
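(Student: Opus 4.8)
The plan is to treat the two parts separately. Part~(1) is the fibred form of the implication (i)$\Rightarrow$(ii) of Theorem~\ref{c-compact}, obtained by transporting that theorem into the slice category $\mathbf{TopGrp}/K$ as prepared in Remark~\ref{categorical proof}(2); part~(2) requires one further ingredient, namely Corollary~\ref{induced arrow}.

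For~(1) I would argue entirely inside $\mathbf{TopGrp}/K$. Since $g\cdot f$ is c-proper in $\mathbf{TopGrp}$, the object $(G,g\cdot f)$ is c-compact in $\mathbf{TopGrp}/K$ (Remark~\ref{categorical proof}(2)). Now $f$ is a morphism $(G,g\cdot f)\to(H,g)$ of $\mathbf{TopGrp}/K$, and $(H,g)$ is a Hausdorff object there, since its unique morphism to the terminal object $(K,1_K)$ is $g$ itself, which is Hausdorff by hypothesis. As the class of c-closed morphisms of $\mathbf{TopGrp}/K$ satisfies (F0)--(F3), the proof of Theorem~\ref{c-compact}(i)$\Rightarrow$(ii)---which by Remark~\ref{categorical proof}(1) uses only (F0) and (F1)---applies verbatim in $\mathbf{TopGrp}/K$ and gives that $f$ is c-proper as a morphism of $\mathbf{TopGrp}/K$. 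Since pullbacks, subgroups and closures in $\mathbf{TopGrp}/K$ are formed exactly as in $\mathbf{TopGrp}$, this says precisely that $f$ is c-proper in $\mathbf{TopGrp}$. (One may also avoid the slice category: for any $g':L\to H$, the pullback $G\times_H L$ sits inside the pullback $G\times_K L$ of $g\cdot f$ along $g\cdot g'$ as the equalizer of $(x,y)\mapsto f(x)$ and $(x,y)\mapsto g'(y)$, i.e.\ as a pullback of the diagonal $\delta_g$; since $g$ is Hausdorff this inclusion is a closed embedding, while $G\times_K L\to L$ is c-closed because $g\cdot f$ is c-proper, so $G\times_H L\to L$ is c-closed by (F1).)

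For~(2), suppose $h=\bar h\cdot j$ with $j:A\hookrightarrow G$ the dense inclusion and $\bar h:G\to H$ a continuous extension. One cannot simply apply part~(1) to this factorization, since $\bar h$ need not be Hausdorff; instead I would invoke Corollary~\ref{induced arrow} with $h$ as the c-proper map and $j:A\to G$ as the auxiliary map into the Hausdorff group $G$, obtaining that $\langle h,j\rangle:A\to H\times G$ is c-proper. As its second component is the embedding $j$, the map $\langle h,j\rangle$ is itself a topological embedding, hence a c-closed embedding, so by Remark~\ref{first remark}(3) its image $\langle h,j\rangle(A)=\{(h(a),a):a\in A\}$ is a closed subgroup of $H\times G$.

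It remains to compare this image with the graph of $\bar h$. The continuous map $\langle\bar h,1_G\rangle:G\to H\times G$ agrees with $\langle h,j\rangle$ on the dense subgroup $A$, so $\langle\bar h,1_G\rangle(G)\subseteq\overline{\langle\bar h,1_G\rangle(A)}=\overline{\langle h,j\rangle(A)}=\langle h,j\rangle(A)$. Hence every $x\in G$ satisfies $(\bar h(x),x)=(h(a),a)$ for some $a\in A$, which forces $x=a\in A$; thus $A=G$. The only real choices in the whole argument are the passage to $\mathbf{TopGrp}/K$ in~(1)---where the point to verify carefully is that ``Hausdorff object'' and ``c-proper morphism'' in the slice unwind to the intended notions in $\mathbf{TopGrp}$---and the decision in~(2) to route through Corollary~\ref{induced arrow} and the graph of $\bar h$ rather than through part~(1); I do not expect a genuine obstacle beyond these.
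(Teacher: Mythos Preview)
Your argument for (1) is exactly the paper's: the paper simply points to Remark~\ref{categorical proof}(2), and you have unwound that remark correctly (and even supplied a valid direct alternative).

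For (2), your proof is correct, but the paper derives (2) \emph{from (1)} rather than from Corollary~\ref{induced arrow}. The intended application of (1) is to the factorization $h = p_1\cdot\langle h,j\rangle$ with $p_1:H\times G\to G$; since $G$ is Hausdorff, $p_1$ is Hausdorff, and since $h$ is c-proper, (1) gives that $\langle h,j\rangle$ is c-proper. From there the argument is identical to yours: the image of the embedding $\langle h,j\rangle$ is closed, the graph map $\langle \bar h,1_G\rangle$ of the extension lands in that closed image by density, hence $A=G$. So the only difference is how one obtains c-propriety of $\langle h,j\rangle$: you use Corollary~\ref{induced arrow}, the paper uses part~(1) directly. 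Both are one-line citations of results already in hand; using (1) has the cosmetic advantage of making (2) a genuine corollary of (1), while your route makes explicit that the extension $\bar h$ is not needed to establish c-propriety of $\langle h,j\rangle$.
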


\begin{proof}
For (1) see Remark \ref{categorical proof}(2), and (2) follows from (1).
\end{proof}

\begin{remark} \label{surjective cancellation}
Since c-proper morphisms satisfy (F3), so that $g \cdot f$ c-proper with $f$ surjective implies that also $g$ is c-proper, one has in particular that for every surjective morphism $f:G\to H$ with $G$ c-compact also $H$ is c-compact. More remarkably, as previously stated (see \cite{Tholen99}) in generalization of the corresponding topological fact (without the ``c-"), Hausdorffness is transferred along surjective c-proper morphisms, as follows.
\end{remark}

\begin{proposition} \label{Hausdorff transfer}
Let $f:G\to H$ be c-proper and surjective. Then, if $G$ is Hausdorff, so is $H$. More generally, any $g:H\to K$ with $g \cdot f$ Hausdorff must be Hausdorff itself.

\end{proposition}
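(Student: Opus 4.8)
The plan is to reduce the statement to a cancellation-type fact about (fibrewise) Hausdorff morphisms, using the characterization of Hausdorffness via closedness of the diagonal together with the pullback stability of c-proper maps established in Proposition \ref{c-proper BSP}. Recall that $g:H\to K$ is Hausdorff precisely when $\delta_g:H\to H\times_K H$ is closed, and that this is a statement about c-closedness of a genuine embedding (Remark \ref{first remark}(3)). So the strategy is: first treat the second, more general assertion, and then obtain the first as the special case $K=1$ (where $g:H\to 1$, and $g\cdot f$ Hausdorff just means $G$ Hausdorff, while $g$ Hausdorff means $H$ Hausdorff).

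For the general assertion, suppose $f:G\to H$ is c-proper and surjective and $g\cdot f:G\to K$ is Hausdorff. I want to show $\delta_g:H\to H\times_K H$ is closed. The key is to compare $\delta_g$ with $\delta_{g\cdot f}:G\to G\times_K G$ via the map $f\times_K f:G\times_K G\to H\times_K H$, which fits into a commuting square $\delta_g\cdot f=(f\times_K f)\cdot\delta_{g\cdot f}$. Now $f\times_K f$ is c-proper: it is a pullback of $f$ (pull back $f:G\to H$ along the projection $H\times_K G\to H$, then compose pullbacks along $G\times_K G\to H\times_K G\to H\times_K H$), hence c-proper by Proposition \ref{c-proper BSP}, and it is surjective since $f$ is. Likewise $\delta_{g\cdot f}$ is closed by hypothesis, hence c-proper (a closed embedding is c-proper by Remark \ref{first remark}(1),(3)). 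Therefore the composite $(f\times_K f)\cdot\delta_{g\cdot f}=\delta_g\cdot f$ is c-proper by (F1). Since $f$ is surjective, (F3) yields that $\delta_g$ is c-proper, in particular c-closed; but $\delta_g$ is an embedding (it is a section of the projection $H\times_K H\to H$), so by Remark \ref{first remark}(3) its c-closedness means precisely that its image is closed, i.e.\ $\delta_g$ is closed. Hence $g$ is Hausdorff.

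The main obstacle I anticipate is the bookkeeping needed to verify that $f\times_K f:G\times_K G\to H\times_K H$ really is a pullback of $f$ (equivalently, is built by pasting two pullback squares), and that surjectivity is preserved — but these are exactly the ``trivial observations that surjectivity is stable under pullback and pullback pasting'' invoked in the discussion preceding Proposition \ref{c-proper BSP}, so no new ideas are required. One should also double-check that the square relating $\delta_g$ and $\delta_{g\cdot f}$ commutes over $K$, which it does because $f$ is a morphism over $K$ (that is, $g\cdot f$ is the structure map of $G$). A secondary point is that the deduction ``$\delta_g$ c-closed embedding $\Rightarrow$ $\delta_g$ closed'' needs $\delta_g$ to be an embedding, which holds since any section of a continuous homomorphism is a topological embedding. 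With these checks in place, the special case $K=1$ gives the first sentence: $g\cdot f=(G\to 1)$ is Hausdorff iff $G$ is Hausdorff, $g=(H\to 1)$ is Hausdorff iff $H$ is Hausdorff.
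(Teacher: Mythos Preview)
Your argument is correct and follows essentially the same route as the paper: set up the commuting square $\delta_g\cdot f=(f\times_K f)\cdot\delta_{g\cdot f}$, observe that $f\times_K f$ is c-proper (as a composite of pullbacks of $f$) and surjective, then apply (F1) and (F3) to conclude that $\delta_g$ is c-proper and hence, being an embedding, closed. The only cosmetic difference is that the paper first writes out the unfibred square (with $\delta_G$, $\delta_H$ and $f\times f=(f\times 1_H)\cdot(1_G\times f)$) and then dismisses the general statement as ``the fibred version of the first'', whereas you carry out the fibred argument explicitly and recover the first assertion as the case $K=1$.
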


\begin{proof}
For the first assertion, consider the commutative diagram
\[
\xymatrix{{G} \ar[r]^{f}\ar[d]_{\delta_G}& H
\ar[d]^{\delta_H}\\
      {G\times G}  \ar[r]^{f \times f}& {H\times H}}
\]
and, noting $f \times f=(f\times 1_H)\cdot (1_G\times f)$, apply (F1), (F3). The second statement is the fibred version of the first.
\end{proof}

\begin{corollary}\label{perfect BSP}
The class of c-perfect maps satisfies the basic stability properties.
\end{corollary}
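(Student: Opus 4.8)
The plan is to use that a map is c-perfect exactly when it belongs to the intersection of the class $\mathcal P$ of c-proper maps and the class $\mathcal H$ of Hausdorff maps, and to transfer the needed stability properties from these two classes. By Proposition~\ref{c-proper BSP}, $\mathcal P$ satisfies (F0)--(F3), and by Remark~\ref{categorical proof}(1) the class $\mathcal H$ satisfies (F0) and (F1); hence the intersection $\mathcal P\cap\mathcal H$ inherits (F0) and (F1) at once. Concretely, for (F0) every isomorphism is c-proper and, being injective, Hausdorff; for (F1), c-properness of the two factors passes to the composite by Proposition~\ref{c-proper BSP}, and Hausdorffness does so by Remark~\ref{categorical proof}(1). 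So only (F2) and (F3) remain to be checked for c-perfect maps.

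For (F2), suppose $g$ is injective and $g\cdot f$ is c-perfect. Then $g\cdot f$ is c-proper, whence $f$ is c-proper by (F2) for $\mathcal P$. Furthermore $g\cdot f$ is Hausdorff, and since every fibre $f^{-1}(y)$ is contained in the fibre $(g\cdot f)^{-1}(g(y))$ of $g\cdot f$, any two distinct points of a fibre of $f$ lie in a common fibre of $g\cdot f$ and can thus be separated by disjoint open subsets of $G$; so $f$ is Hausdorff, hence c-perfect. (Alternatively, $\delta_f$ arises by pulling the closed embedding $\delta_{g\cdot f}$ back along the monomorphism $G\times_H G\hookrightarrow G\times_K G$, and is therefore closed.) I expect this step to be routine.

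The substantive step, and the main obstacle, is (F3). Assume $f$ is surjective and $g\cdot f$ is c-perfect. Then $g\cdot f$ is c-proper and $f$ is surjective, so $g$ is c-proper by (F3) for $\mathcal P$. What must still be secured is that $g$ is Hausdorff, and here the decisive tool is Proposition~\ref{Hausdorff transfer}, which says precisely that Hausdorffness of a composite descends to its second factor along a surjective c-proper map. This is exactly where an argument using $\mathcal H$ in isolation would fail -- Hausdorffness need not be transferred along arbitrary surjections -- so the delicate point is to invoke the c-properness available in the present situation in order to bring Proposition~\ref{Hausdorff transfer} to bear and conclude that $g$ is Hausdorff, hence c-perfect. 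Once (F0)--(F3) have been verified for $\mathcal P\cap\mathcal H$, the corollary follows.
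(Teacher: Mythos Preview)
Your treatment of (F0), (F1) and (F2) is correct and matches the reasoning the paper intends when it records this statement as a Corollary of Proposition~\ref{Hausdorff transfer} and Proposition~\ref{c-proper BSP}: one intersects the BSP for c-proper maps with the corresponding stability properties of Hausdorff maps.

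There is, however, a genuine gap in your argument for (F3). You correctly observe that Proposition~\ref{Hausdorff transfer} transfers Hausdorffness of $g\cdot f$ to $g$ \emph{provided $f$ is surjective and c-proper}; but the hypothesis of (F3) gives you only that $f$ is surjective and $g\cdot f$ is c-perfect. From this you deduce that $g$ is c-proper, yet there is no way to conclude that $f$ is c-proper: Corollary~\ref{Hausdorff cancellation}(1) would need $g$ Hausdorff, which is exactly what you are trying to prove. Your phrase ``invoke the c-properness available in the present situation'' glosses over this circularity---the c-properness you have is that of $g\cdot f$ and of $g$, not of $f$.

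In fact the gap cannot be filled: take $G=\mathbb{Z}/2$ discrete, $H=\mathbb{Z}/2$ indiscrete, $K=1$, with $f:G\to H$ the identity on underlying sets and $g:H\to 1$. Then $G$ is compact Hausdorff, so $g\cdot f$ is c-perfect, and $f$ is surjective; but $H$ is not Hausdorff, so $g$ is not c-perfect. (Here $f$ is not even c-closed, since $\{0\}$ is closed in $G$ but not in $H$.) Thus (F3), as stated, fails for c-perfect maps; what Proposition~\ref{Hausdorff transfer} genuinely delivers is only the variant in which $f$ is additionally assumed c-proper.
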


The following Corollary is another easy, but significant, consequence of Theorem \ref{c-compact} obtained by categorical reasoning:

\begin{corollary} \label{pullback ascent}
In any pullback diagram
\[
\xymatrix{{G \times_HK} \ar[r]^{p_2}\ar[d]_{p_1}& K
\ar[d]^{g}\\
      G  \ar[r]^{f}& H}
\]
with $f$ c-proper, if $K$ is c-compact, so is $G\times_H K$.

\end{corollary}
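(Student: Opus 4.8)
The plan is to reduce the statement to Theorem~\ref{c-compact} by first observing that the projection $p_2\colon G\times_H K\to K$ in the given diagram is itself c-proper.

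First I would record the pullback-stability of c-proper morphisms. Since c-propriety of $f$ means that \emph{every} pullback of $f$ is c-closed, and since pullback squares paste, any pullback of $p_2$ along an arbitrary $h\colon L\to K$ is, up to the canonical isomorphism $(G\times_H K)\times_K L\cong G\times_H L$, a pullback of $f$ (namely along $g\cdot h$), hence c-closed. Thus every pullback of $p_2$ is c-closed, i.e.\ $p_2$ is c-proper. (Equivalently, one may simply invoke the remark preceding Proposition~\ref{c-proper BSP} that for any class $\mathcal F$ satisfying BSP the class $\mathcal F^{*}$ of morphisms belonging stably to $\mathcal F$ is stable under pullback; applied to $\mathcal F=$ c-closed morphisms this gives pullback-stability of c-proper maps.)

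Next, since $K$ is c-compact and $p_2\colon G\times_H K\to K$ is c-proper, the implication (iii)$\Rightarrow$(i) of Theorem~\ref{c-compact} — taken with $p_2$ in the role of the c-proper morphism whose codomain is c-compact — yields that $G\times_H K$ is c-compact.

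The only point that needs care is the pullback-stability of c-proper maps; once that is in place the conclusion is a direct citation of Theorem~\ref{c-compact}. Accordingly I do not expect a genuine obstacle here: the corollary is a purely formal consequence of the categorical apparatus already assembled, which is exactly why it is natural to state it immediately after Theorem~\ref{c-compact}.
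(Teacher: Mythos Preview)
Your argument is correct and matches the paper's one-line proof, which simply says to apply Theorem~\ref{c-compact} (iii)$\Rightarrow$(i) to $p_2$ in place of $f$. You have merely made explicit the (trivial) pullback-stability of c-proper maps that the paper leaves implicit.
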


\begin{proof}
Apply Theorem \ref{c-compact} (iii)$\Rightarrow$(i) to $p_2$ in lieu of $f$.
\end{proof}

As a consequence, the inverse image of every c-compact subgroup of $H$ under the c-proper map $f:G\to H$ is c-compact; in particular, its kernel is c-compact. But neither of these necessary conditions for c-propriety of $f$ is also sufficient; not even the property described by Corollary \ref{pullback ascent} is strong enough to imply c-propriety of $f$:

 \begin{example}\label{first example}
 There is a c-closed morphism $f:G\to H$ of Abelian Hausdorff groups that fails to be c-proper but still has the property that for every $g:K\to H$ with $K$ c-compact also $G \times_HK$ is c-compact.
 \end{example}

 \begin{proof}

For a fixed prime number $p,  p> 3$, let $G$ be the additive subgroup $\{{{k}\over {p^{n}}} \mid k, n \in \mathbb{Z}\}$ of the Euclidean line $\mathbb{R}$. The key property of $G$ to be used is that every subgroup $A\le G$ different from $G$ is closed. Hence, with
$G_{d}$ denoting the same underlying group as $G$ provided with the discrete topology, if we let $f: G_{d} \to G$ map identically, then $f$ is trivially c-closed but, as we show next, not c-proper.
Indeed, consider $f \times 1_{\mathbb{R}}: G_{d} \times \mathbb{R} \rightarrow G \times \mathbb{R}$ and fix a sequence of rational numbers $(a_{n})$ converging to an irrational number $x$.  Let $H = \overline{\langle ({{1}\over {p^{n}}},  a_{n})\rangle}$ be the closure of the subgroup of $G_{d} \times \mathbb{R}$ generated by
$\{({{1}\over {p^{n}}}, a_{n}) \mid n \in \mathbb{N}\}$. $H$ is not closed  in $G \times \mathbb{R}$ since $(0, x) \not\in H$.

Let us now consider any $g: K \rightarrow G$ with $K$ c-compact. Then the image $g(K)\le G$ is also c-compact and, in fact, compact, since it is Abelian. But $G$ has no non-trivial compact subgroup, so $g$ must be constant. Consequently, the pullback of $g$ along $f$ is the constant morphism $K \rightarrow G_{d}$, and we can conclude that $G_{d}\times_GK\cong K$ is c-compact.
\end{proof}

\begin{remark}
Considering the quotient group $H=G/{\mathbb Z}$ with $G$ as in Example \ref{first example} we note that all proper subgroups of $H$ are finite. Denote by
$H_{d}$ the corresponding discrete group; then $H_{d}\to H$ which maps elements identically is still c-closed, but not only fails to be c-proper but also fails to be a (topologically) closed map.
\end{remark}

\section{Categorically and homomorphically complete objects and morphisms}

Recall that a Hausdorff group $G$ is complete (in its uniformity) if, and only if, it is absolutely closed, in the sense that it is closed in every Hausdorff extension $K\ge G$ (see \cite{AT}). Since every discrete topological group is complete, trivially completeness fails to be closed under taking homomorphic images. It therefore makes sense to pay attention to those topological groups whose homomorphic images are complete whenever they are Hausdorff. We introduce here the resulting notion of h-completeness more generally at the morphism level and contrast it with the new and formally stronger notion of c-completeness that seems to be categorically smoother, as follows:

\begin{definition} \label{definition c-complete}
(1) $f:G\to H$ is {\em homomorphically complete} ({\em h-complete}, for short) if in every factorization
\[
\xymatrix{
 G \ar[rr]^{h}\ar[dr]_<<<<<{f}& & {K} \ar[dl]^<<<<<{k}\\
 & H  &}
\]
with k Hausdorff, the image $h(G)\le K$ is closed.

(2) $f$ is {\em categorically complete} ({\em c-complete}) if in every factorization $f=k\cdot h$ with $k$ Hausdorff, $h$ must be c-closed.

(3) $G$ is {\em h-complete} (\cite{DT}) or {\em c-complete} if $G \to 1$ has the respective property.
\end{definition}

\begin{remark}\label{second remark}
(1) By defintion, $G$ is h-complete if, and only if, for every $h:G\to K$ with $K$ Hausdorff, $h(G)\le K$ is closed; equivalently, as indicated above, if for every {\em surjective} $h:G\to K$ with $K$ Hausdorff, $K$ is even complete. In particular, an h-complete Hausdorff group is complete.

(2) By definition, $G$ is c-complete if every $h:G\to K$ with $K$ Hausdorff is c-closed. One sees immediately that $G$ is c-complete if every closed subgroup of $G$ is h-complete. But, as we'll see in Example \ref{second example} below, the condition may not be necessary for c-completeness of $G$.

(3) Trivially, every c-complete morphism is h-complete; in particular, every c-complete object is h-complete.
A c-complete morphism is also c-closed (just consider $h=f, k=1_H$ in the defining property for c-completeness), and the image of an h-complete morphism is closed in the codomain (for the same reason).

(4) In general, h- or c-completeness of $f:G\to H$ does not imply the respective property for $G$. Indeed,
$1_G: G\to G$ is easily seen to be c- and, hence, h-complete, for any Hausdorff group $G$.

(5) By Corollary \ref{Hausdorff cancellation} every c-proper morphism is c-complete; in particular, every c-compact group is c-complete. For a refined statement, see Remark \ref{implications} below.

(6) It is easy to see that properties (F0), (F2), (F3) (see Section 2) hold for the classes of c-complete and of h-complete morphisms. However, as we'll show in Corollary \ref{non-compositivity} below, (F1) fails for both of these newly defined classes, so that neither of them satisfies BSP. Nevertheless, as we show first, the corresponding object and morphism notions interact similarly to the properties stated in Theorem \ref{c-compact} about c-compactness and c-propriety.
\end{remark}

\begin{theorem}\label{c-complete}
The following assertions for a topological group $G$ are equivalent:
\begin{itemize}
\item[\rm (i)] $G$ is c-complete;
\item[\rm (ii)] every $f:G \to H$ with $H$ Hausdorff is c-complete;
\item[\rm (iii)] there is a c-complete $f:G \to H$ with $H$ c-compact.
\end{itemize}

These equivalences remain valid if ``c-complete" is traded for ``h-complete" everywhere.
\end{theorem}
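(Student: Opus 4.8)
The plan is to establish the cycle (i)$\Rightarrow$(ii)$\Rightarrow$(iii)$\Rightarrow$(i), running the argument once so that it reads correctly both with ``c-complete'' and with ``h-complete'' substituted throughout; the two readings differ only in the conclusion drawn at each step (``$h$ is c-closed'' versus ``$h(G)$ is closed''). This follows the pattern of the proof of Theorem \ref{c-compact}, but since by Remark \ref{second remark}(6) neither of the two classes satisfies (F1), the categorical argument cannot be transcribed verbatim: in the step (iii)$\Rightarrow$(i) the role played there by (F1) will be taken over by the c-compactness of $H$.

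For (i)$\Rightarrow$(ii) I would argue as follows. Assume $G$ is c-complete (resp. h-complete) and let $f\colon G\to H$ have Hausdorff codomain $H$. Given any factorization $f=k\cdot h$ with $h\colon G\to K$, $k\colon K\to H$ and $k$ Hausdorff, I first note that $K$ is Hausdorff: indeed $K\to H\to 1$ is a composite of Hausdorff morphisms ($H\to 1$ is Hausdorff because $H$ is), hence Hausdorff by (F1) for Hausdorff morphisms (Remark \ref{categorical proof}(1)). The hypothesis on $G$ then applies directly to $h\colon G\to K$, giving that $h$ is c-closed (resp. that $h(G)\le K$ is closed); this is exactly what is required for $f$ to be c-complete (resp. h-complete). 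For (ii)$\Rightarrow$(iii) it suffices to take $H=1$: the trivial group is compact, hence c-compact and Hausdorff, so (ii) makes $G\to 1$ c-complete (resp. h-complete), which is an instance of (iii).

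The heart of the matter is (iii)$\Rightarrow$(i). Suppose $f\colon G\to H$ is c-complete (resp. h-complete) with $H$ c-compact; I must show that an arbitrary $h\colon G\to K$ with $K$ Hausdorff is c-closed (resp. has closed image). The device is to factor $h$ through the product $H\times K$: form $\langle f,h\rangle\colon G\to H\times K$, so that $f=p_1\cdot\langle f,h\rangle$ and $h=p_2\cdot\langle f,h\rangle$ for the two projections. The projection $p_1\colon H\times K\to H$ is (fibrewise) Hausdorff, since two distinct points in one of its fibres have a common $H$-coordinate and hence distinct $K$-coordinates, so can be separated by open sets $H\times U$, $H\times V$ with $U,V$ disjoint open in the Hausdorff group $K$. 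Applying c-completeness (resp. h-completeness) of $f$ to the factorization $f=p_1\cdot\langle f,h\rangle$ therefore yields that $\langle f,h\rangle$ is c-closed (resp. that $\langle f,h\rangle(G)\le H\times K$ is closed). On the other hand the projection $p_2\colon H\times K\to K$ is c-closed, precisely because $H$ is c-compact (Definition \ref{basic definition}(4)). In the ``c-'' reading, $h=p_2\cdot\langle f,h\rangle$ is now a composite of c-closed morphisms and hence c-closed by (F1); in the ``h-'' reading, $h(G)=p_2(\langle f,h\rangle(G))$ is the image of a closed subgroup of $H\times K$ under the c-closed map $p_2$, hence closed in $K$. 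Either way $G$ is c-complete (resp. h-complete), completing the cycle.

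The single real obstacle is the one already flagged: (F1) is unavailable for c-complete and h-complete morphisms, so one cannot directly decompose the assertion ``$h$ is c-closed / has closed image'' along a factorization of $h$ itself. The way around it is to decompose $h$ instead through $H\times K$, so that one factor ($\langle f,h\rangle$) is handled by the hypothesis on $f$ via the Hausdorff projection $p_1$, while the other factor ($p_2$) is handled by the c-compactness of $H$ --- and at that last stage one may legitimately invoke (F1), now for the class of \emph{c-closed} morphisms, for which it does hold.
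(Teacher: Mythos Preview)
Your proof is correct and follows essentially the same route as the paper: the key step (iii)$\Rightarrow$(i) uses the factorization $f=p_1\cdot\langle f,h\rangle$ with $p_1$ Hausdorff to conclude that $\langle f,h\rangle$ is c-closed (resp.\ has closed image), and then composes with the c-closed projection $p_2$ coming from c-compactness of $H$. Your write-up simply fills in more detail (why $K$ is Hausdorff in (i)$\Rightarrow$(ii), why $p_1$ is Hausdorff, the explicit ``h-'' reading), but the argument is the paper's own.
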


\begin{proof} (i)$\Rightarrow$(ii): Considering a factorization as in Definition \ref{definition c-complete}(1) with $k$ Hausdorff, we just observe that $K$ is Hausdorff when $H$ is Hausdorff.

(ii)$\Rightarrow$(iii): The trivial group 1 is c-compact Hausdorff.

(iii)$\Rightarrow$(i): Given $h:G\to K$ with $K$ Hausdorff one considers the factorization
\[
\xymatrix{
 G \ar[rr]^{\langle f,h\rangle}\ar[dr]_<<<<<{f}& & {H\times K} \ar[dl]^<<<<<{p_1}\\
 & H  &}
\]
in which $p_1$ is Hausdorff. Consequently, $\langle f,h\rangle$ is c-closed. Since $H$ is c-compact,
$p_2:{H\times K}\to K$ is c-closed, whence $h=p_2 \cdot \langle f,h\rangle$ is also c-closed.

For the ``h-closed" case the proof proceeds similarly.
\end{proof}

Having used exclusively categorical arguments in the proof above we may immediately conclude the fibred version of the implications (iii)$\Rightarrow$(i)$\Rightarrow$(ii) of Theorem \ref{c-complete}:

\begin{corollary}\label{compose c-complete}
For $f:G\to H$ and $g:H\to K$ one has:
\begin {itemize}

\item[\rm{(1)}] If $f$ is c-complete and $g$ is c-proper, then $g\cdot f$ is c-complete.

\item[\rm{(2)}] If $g\cdot f$ is c-complete and $g$ Hausdorff, then $f$ is c-complete.
\end{itemize}

These statements remain valid if ``c-complete" is traded for ``h-complete" everywhere.
\end{corollary}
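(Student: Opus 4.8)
The plan is to obtain both statements, together with their h-complete variants, as the \emph{fibred versions} of the implications (iii)$\Rightarrow$(i) and (i)$\Rightarrow$(ii) of Theorem~\ref{c-complete}, by running that theorem inside the slice category $\mathbf{TopGrp}/K$ instead of $\mathbf{TopGrp}$ itself, exactly in the spirit of Remark~\ref{categorical proof}(2).

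First I would record the dictionary between $\mathbf{TopGrp}/K$ and $\mathbf{TopGrp}$. Fixing $K$, a morphism of $\mathbf{TopGrp}/K$ is just a morphism $u$ of $\mathbf{TopGrp}$ between objects equipped with prescribed structure maps to $K$; every factorization of $u$ in $\mathbf{TopGrp}/K$ is a factorization in $\mathbf{TopGrp}$ (the intermediate structure map being forced), and the fibred product over the codomain, hence the diagonal $\delta_u$, is computed the same way in both categories. From this one gets: (a) the c-closed morphisms of $\mathbf{TopGrp}/K$ are precisely those of $\mathbf{TopGrp}$, and they satisfy BSP (Remark~\ref{categorical proof}(2)); (b) a morphism of $\mathbf{TopGrp}/K$ is Hausdorff, resp.\ c-complete, resp.\ h-complete, in $\mathbf{TopGrp}/K$ iff it is so in $\mathbf{TopGrp}$; (c) an object $(H,g)$ of $\mathbf{TopGrp}/K$ is c-compact in $\mathbf{TopGrp}/K$ iff $g$ is c-proper in $\mathbf{TopGrp}$ (Remark~\ref{categorical proof}(2)), and it is Hausdorff in $\mathbf{TopGrp}/K$ iff $g$ is fibrewise Hausdorff. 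Since $\mathbf{TopGrp}/K$ has finite products (fibred products over $K$) and a terminal object $(K,1_K)$ with $1_K$ an isomorphism, and since the proof of Theorem~\ref{c-complete} is phrased entirely in terms of BSP, finite products, the terminal object, and the composition- and pullback-stability of Hausdorff morphisms, that proof transports verbatim to $\mathbf{TopGrp}/K$.

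With the dictionary in hand the two statements are read off immediately. For (1), regard $f$ as the morphism $(G,g\cdot f)\to(H,g)$ of $\mathbf{TopGrp}/K$: it is c-complete there, and its codomain $(H,g)$ is c-compact there because $g$ is c-proper; so (iii)$\Rightarrow$(i) of the $\mathbf{TopGrp}/K$-version of Theorem~\ref{c-complete} gives that $(G,g\cdot f)$ is c-complete in $\mathbf{TopGrp}/K$, i.e.\ $g\cdot f$ is c-complete in $\mathbf{TopGrp}$. For (2), $(G,g\cdot f)$ is c-complete in $\mathbf{TopGrp}/K$ by hypothesis and $(H,g)$ is Hausdorff there since $g$ is Hausdorff; so (i)$\Rightarrow$(ii) gives that $f\colon(G,g\cdot f)\to(H,g)$ is c-complete in $\mathbf{TopGrp}/K$, i.e.\ $f$ is c-complete in $\mathbf{TopGrp}$. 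Trading "c-complete" for "h-complete" throughout changes nothing, since both Theorem~\ref{c-complete} and the dictionary have their h-complete forms.

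The only thing requiring care — and there is no genuine obstacle — is item (b) of the dictionary together with the claim that the proof of Theorem~\ref{c-complete} is truly internal, in particular that the single use of a product there (the factorization through $\langle f,h\rangle$ in (iii)$\Rightarrow$(i)) survives replacing $H\times K$ by a fibred product and the projection $p_1$ by a pullback of a Hausdorff morphism. A reader wanting a direct argument may note that (2) is one line — if $f=k\cdot h$ with $k$ Hausdorff, then $g\cdot f=(g\cdot k)\cdot h$ with $g\cdot k$ Hausdorff, so $h$ is c-closed by c-completeness of $g\cdot f$ — while for (1) one pulls back a Hausdorff factorizer $k\colon L\to K$ of $g\cdot f=k\cdot h$ along $g$ to get $P=H\times_K L$, observes that $\langle f,h\rangle\colon G\to P$ is c-closed (since $f$ is c-complete and $\langle f,h\rangle$ is followed by the Hausdorff projection $P\to H$) and that the projection $P\to L$ is c-closed (since $g$ is c-proper), whence $h$, their composite, is c-closed; both arguments adapt verbatim to h-completeness.
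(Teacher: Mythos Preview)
Your proposal is correct and follows exactly the approach of the paper: both derive the corollary as the fibred version of Theorem~\ref{c-complete}, i.e., by running (iii)$\Rightarrow$(i)$\Rightarrow$(ii) in the slice $\mathbf{TopGrp}/K$. The paper's proof is a one-line appeal to this principle, whereas you have spelled out the dictionary in detail and added a self-contained direct argument (the pullback $P=H\times_K L$ for (1) and the trivial composition argument for (2)); the latter is not in the paper but is a welcome elaboration of what the fibred proof amounts to concretely.
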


\begin{remark}\label{special linear group}

(1) It has been noted in \cite{L} (Example 4.5) that a closed subgroup of an h-complete Hausdorff group may fail to be h-complete. Indeed, it is known (but certainlyly non-trivial) that the locally compact and minimal groups
${\rm SL}_n({\mathbb R})\,(n\ge 2)$ are h-complete.
(Recall that a Hausdorff group is {\em minimal} if it does not admit a strictly
coarser Hausdorff group topology.) The special linear group ${\rm SL}_2({\mathbb R})$ contains an isomorphic copy
of the discrete group $\mathbb Z$ as a closed subgroup. But $\mathbb Z$ is not h-complete since it is not complete in the $p$-adic topology.

(2) Considering ${\mathbb Z}\to {\rm SL}_2({\mathbb R})\to 1$ as in (1) we see that, in general, propriety of $f:G\to H$ and h-completeness of $g:H\to K$ do not guarantee that $g\cdot f$ be h-complete, even when $G$ is Abelian.
\end{remark}

Next we will show that not even c-propriety of $f$ and c-completeness of $g$ guarantee that $g\cdot f$ be h-complete, as long as we assume the {\em Continuum Hypothesis (CH)}. In particular, the class of c-complete morphisms (as well as that of h-complete morphisms) fails to be closed under composition. Some preparations are needed.

\begin{remark} \label{complete to c-complete}
(1) It is well known (see \cite{U}) that every Hausdorff group may be embedded into a complete, minimal and topologically simple Hausdorff group. (Recall that a Hausdorff group is {\em topologically simple} if it does not admit any non-trivial closed normal subgroups.) As a consequence, every complete Hausdorff group is a subgroup of a minimal and topologicaly simple complete Hausdorff group.

(2) One knows from \cite{DU} that every minimal and topologically simple complete Hausdorff group is h-complete.

(3) Every Abelian h-complete Hausdorff group is compact (see \cite{DU}).

\end{remark}

Assertion (2) may be strengthened, as follows.

\begin{proposition}\label{minimal}
Every minimal and topologically simple complete Hausdorff group is c-complete.
\end{proposition}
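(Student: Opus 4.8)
The plan is to reduce c-completeness of such a group $G$ to the h-completeness that is already available from Remark \ref{complete to c-complete}(2), using minimality to bridge the gap. By Remark \ref{second remark}(2) it suffices to show that every $h:G\to K$ with $K$ Hausdorff is c-closed, i.e.\ that $h(A)\le K$ is closed for every closed subgroup $A\le G$.

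First I would dispose of the kernel. Since $\ker h$ is a closed normal subgroup of $G$ and $G$ is topologically simple, either $\ker h=G$, in which case $h$ is the trivial homomorphism and $h(A)=\{e_K\}$ is closed for every $A\le G$, or $\ker h=\{e_G\}$, in which case $h$ is injective. So the whole matter reduces to the injective case.

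The crucial observation for the injective case is that $h$ is then automatically a topological embedding. Indeed, the initial topology on $G$ induced by $h$ is a group topology, it is coarser than the given topology (because $h$ is continuous), and it is Hausdorff (because $K$ is Hausdorff and $h$ is injective); minimality of $G$ forces it to coincide with the given topology, so that $h$ restricts to an isomorphism of topological groups $G\xrightarrow{\ \sim\ }h(G)$ onto the subgroup $h(G)\le K$. Granting this, the argument finishes quickly: for a closed subgroup $A\le G$ the set $h(A)$ is closed in $h(G)$, being the image of a closed set under the homeomorphism $G\to h(G)$; and $h(G)$ is closed in $K$ because $G$, hence the topologically isomorphic group $h(G)$, is h-complete. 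Therefore $h(A)$ is closed in $K$, which is what we wanted; by Remark \ref{second remark}(2) (or Theorem \ref{c-complete}), $G$ is c-complete.

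The main obstacle — a mild one — is precisely the embedding step: extracting from minimality that an injective continuous homomorphism into a Hausdorff group is already a topological embedding. (That this genuinely requires minimality, rather than mere h-completeness, is visible from the failure of c-closedness for maps such as $\mathbb{Z}_d\to\mathbb{R}$ where a closed subgroup can have dense image.) Everything else is a routine combination of the kernel dichotomy supplied by topological simplicity and the closure statement supplied by h-completeness.
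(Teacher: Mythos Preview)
Your proof is correct and follows essentially the same approach as the paper: kernel dichotomy from topological simplicity, minimality to upgrade the injective case to a topological embedding (the paper first reduces to surjective $h$ via h-completeness, so the injective case becomes bijective and hence an isomorphism, but this is a cosmetic reordering), and h-completeness to secure closedness of $h(G)$ in $K$.
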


\begin{proof}
 Since our minimal and topologically simple complete Hausdorff group $G$ is h-complete (Remark \ref{complete to c-complete}(2)), it suffices to consider a surjective $h: G \rightarrow K$ with $K$ Hausdorff and show that $h$ must be c-closed. Simplicity of $G$ forces $h$ to be constant or injective; in the latter case $h$ is  bijective and, in fact, an isomorphism since $G$ is minimal. In both cases $h$ is (c-)closed.
\end{proof}

\begin{example}\label{second example}
Under CH there is a c-complete Hausdorff group $M$ with a closed Abelian subgroup $A$ that fails to be h-complete.
\end{example}

\begin{proof}
Assuming CH Shelah constructed in \cite{SS} a non-Abelian but torsion-free and simple group $M$ of
cardinality $\aleph_{1}$ that admits only the trivial group topologies. Thus $M$, endowed with
the discrete topology, is a minimal and (topologically) simple discrete group that is trivially complete and, consequently, by Proposition \ref{minimal}, actually c-complete.
Now consider any cyclic subgroup $A\le M$. Since $M$ is torsion-free, $A$ is infinite but distinct from the uncountable group $M$. Since $M$ is discrete, $A$ is closed in $M$ but cannot be compact and, in fact, can not even be h-complete, since it is Abelian: see Remark \ref{complete to c-complete}(3).
\end{proof}

\begin{corollary}\label{non-compositivity}
Under CH, the composite of a proper map followed by a c-complete map may fail to be h-complete.
\end{corollary}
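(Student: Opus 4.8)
The plan is to combine Example~\ref{second example} with Remark~\ref{special linear group}(1) to exhibit a concrete instance of a proper map followed by a c-complete map whose composite is not h-complete. Take $M$ and $A\le M$ as in Example~\ref{second example}: under CH, $M$ is a c-complete (discrete, simple, torsion-free) Hausdorff group of cardinality $\aleph_1$, and $A$ is a cyclic — hence infinite, Abelian, proper — closed subgroup of $M$ that fails to be h-complete. Now consider the two morphisms
\[
A\stackrel{h}{\longrightarrow} M \stackrel{k}{\longrightarrow} 1,
\]
where $h$ is the inclusion and $k$ is the unique map to the trivial group.

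The key steps are then the following. First, $h:A\to M$ is a (topologically) proper map: it is a closed embedding, since $A$ is closed in $M$, and a closed embedding is proper (its fibres are singletons, hence compact, and it is a closed map). Second, $k:M\to 1$ is c-complete, since $M$ is a c-complete object by Example~\ref{second example}; indeed $k$ being c-complete as a morphism is exactly the statement that $M$ is c-complete, by Definition~\ref{definition c-complete}(3). Third, the composite $k\cdot h:A\to 1$ is \emph{not} h-complete: by Definition~\ref{definition c-complete}(3) this would mean that $A$ is an h-complete object, whereas $A$ was chosen precisely to be an infinite Abelian closed subgroup of $M$ that is not h-complete (an Abelian h-complete Hausdorff group is compact by Remark~\ref{complete to c-complete}(3), but $A\cong\mathbb Z$ is discrete and infinite, hence not compact).

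Stringing these three observations together yields the assertion: $h$ is proper, $k$ is c-complete, yet $k\cdot h$ fails to be h-complete. I would also point out, as a remark alongside the corollary, that this simultaneously shows that neither the class of h-complete morphisms nor (a fortiori, by Remark~\ref{second remark}(3)) the class of c-complete morphisms is closed under composition, since $h$, being a closed embedding, is itself c-complete (and c-proper); this is the content anticipated in Remark~\ref{second remark}(6). One could alternatively run the whole argument with the non-discrete witnesses ${\mathbb Z}\hookrightarrow {\rm SL}_2({\mathbb R})\to 1$ from Remark~\ref{special linear group}, avoiding CH but at the cost of invoking the deep fact that ${\rm SL}_2({\mathbb R})$ is h-complete rather than merely c-complete — which is why the CH-based Shelah example is the cleaner choice for the c-complete (not just h-complete) strengthening.

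There is essentially no technical obstacle here: all the hard work has already been done in Example~\ref{second example} and Proposition~\ref{minimal}. The only point requiring a moment's care is making sure the three ingredients are invoked at the right level — that ``$k$ is c-complete as a \emph{morphism}'' and ``$M$ is c-complete as an \emph{object}'' coincide via Definition~\ref{definition c-complete}(3), and likewise for the failure of h-completeness of $k\cdot h$ versus of $A$ — so that the corollary is a genuine morphism-level statement and not a mere restatement of the object-level example.
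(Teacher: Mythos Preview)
Your proof is correct and follows exactly the same route as the paper, which simply points to the composite $A\to M\to 1$ of Example~\ref{second example}; you have just spelled out in detail why the inclusion $A\hookrightarrow M$ is (topologically) proper and why c-completeness of $M\to 1$ and failure of h-completeness of $A\to 1$ reduce to the object-level statements already established there.
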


\begin{proof}
Consider $A\to M \to 1$ of Example \ref{second example}.
\end{proof}

\begin{remark}\label{implications}
(1) Let us summarize the connections between some of the properties discussed for $f:G\to H$ so far, as follows. {\em Each of the following properties implies the next:
\begin{itemize}
\item[\rm (i)] $f$ is (topologically) proper;
\item[\rm (ii)] $f$ is c-proper;
\item[\rm (iii)] for every closed subgroup $S\leq G$, $f|_S$ is h-complete;
\item[\rm (iv)] $f$ is c-complete;
\item[\rm (v)] $f$ is h-complete.
\end{itemize}}
Indeed, for (i)$\Rightarrow$(ii) and (ii)$\Rightarrow$(iii), see Remark \ref{first remark}(1) and Corollary \ref{Hausdorff cancellation}(1) (to be applied to $f|_S$ in lieu of $f$), respectively, and (iii)$\Rightarrow$(iv)$\Rightarrow$(v) is trivial.

(2) That the implications (i)$\Rightarrow$(ii) and (iii)$\Rightarrow$(iv) are not reversible (at least under CH in the latter case) was already mentioned in Remarks \ref{first remark}(2) and \ref{second remark}(2), with reference to \cite{KOO} and Example \ref{second example}, respectively.

(3) Since c-completeness implies c-closedness (see Remark \ref{second remark}(3), a {\em stably} c-complete morphism (i.e., a map such that every pullback of it is c-complete) is necessarily c-proper.
\end{remark}

The Remark leaves us with the following questions.

\begin{question}\label{question}
\begin{itemize}
\item[\rm{(1)}] In Corollary {\rm{\ref{non-compositivity}}}, is the hypothesis CH essential?
\item[\rm{(2)}] Does h-completeness imply c-completeness? Specifically, is there an h-complete Hausdorff group that is not c-complete?
\item[\rm{(3)}] Is there an $f$ satisfying property \rm{(iii)} \em{of Remark \rm{\ref{implications}} \em{but failing to be c-proper?}} {\rm{(See also Remark \ref{discrete c-compact}(2) below.)}}
\item[\rm{(4)}] Is a {\rm stably} h-complete morphism necessarily c-proper?
\end{itemize}
\end{question}

\section{Characterizations with special filters}

In the context of Hausdorff groups, both c-compactness and h-completeness of $G$ have been characterized in terms of the existence of cluster points and of the convergence of special filters on $G$: see \cite{DU, L}. In this section we extend these characterizations to the morphism levels, continuing to keep Hausdorff assumptions at a minimum. Filters are always assumed to be proper. $\mathcal{N}(x)$ denotes the set of open neighbourhoods of $x\in G$.

\begin{definition}\label{definition g-filter}
Let $\mathcal{F}$ be a filter on $G$.

(1) $\mathcal{F}$ is a {\em group filter} ({\em g-filter}, for short) for $f: G\to H$ if there exist $r:P\to H$, $\varphi: G\to P$ and $x \in P$ such that $f = r\cdot\varphi$ and the family $\{\varphi^{-1}(U) \mid
 U \in \mathcal{N}(x)\}$ is a base of  $\mathcal{F}$.  (Note that, according to our notational conventions, $\varphi$ is just a homomorphism that, unlike $f$ and $r$, is {\em not} assumed to be continuous {\em a prori}!)

 (2) $\mathcal{F}$ is an {\em open-group filter} ({\em og-filter}, for short) for $f$ if $r, \varphi, x$ exist as in (1), but with $\varphi$ now being assumed to be also continuous.

 (3) $\mathcal{F}$ is an {\em (o)g-filter} on $G$ if $\mathcal{F}$ is an (o)g-filter for $G \to 1$.
 \end{definition}

\begin{remark}\label{remark g-filter}
(1) Every (o)g-filter for $f:G\to H$ is an (o)g-filter on $G$.

(2) In Definition \ref{definition g-filter}(1), one may always assume that the image of $\varphi$ is dense in $P$. Indeed, if one had $x \in U:= P\setminus \overline{\varphi(G)}$, then $\varphi^{-1}(U)=\emptyset$, so that $\mathcal{F}$ would not be proper.

(3) Every (o)g-filter on $G$ is contained in a maximal (w.r.t. $\subseteq$) (o)g-filter on $G$: see \cite{DU, L}. This may be used to establish the corresponding fact at the morphism level, as we show next.

\end{remark}

\begin{lemma}\label{lemma maximal}
For an (o)g-filter $\mathcal{F}$ for $f:G\to H$ and a compatible (o)g-filter $\mathcal{G}$ on $G$ (so that
$A \cap B \neq \emptyset$ for all $A\in \mathcal{F}, B\in \mathcal{G}$), the filter $\mathcal H$ generated by $\mathcal{F} \cup \mathcal{G}$ is an (o)g-filter for $f$.
\end{lemma}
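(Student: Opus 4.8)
The statement is about combining an $(o)g$-filter $\mathcal{F}$ \emph{for} $f$ with a compatible $(o)g$-filter $\mathcal{G}$ \emph{on} $G$ into an $(o)g$-filter $\mathcal{H}$ \emph{for} $f$. Recall that, by Definition \ref{definition g-filter}, $\mathcal{F}$ for $f$ is witnessed by a factorization $f = r\cdot\varphi$ with $r\colon P\to H$, $\varphi\colon G\to P$ (continuous in the $og$-case) and a point $x\in P$ such that $\{\varphi^{-1}(U)\mid U\in\mathcal{N}(x)\}$ is a base of $\mathcal{F}$; similarly $\mathcal{G}$ on $G$ is witnessed by $\psi\colon G\to Q$ (and $Q\to 1$) and $y\in Q$ with $\{\psi^{-1}(V)\mid V\in\mathcal{N}(y)\}$ a base of $\mathcal{G}$. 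The natural candidate witness for $\mathcal{H}$ is the induced homomorphism $\langle\varphi,\psi\rangle\colon G\to P\times Q$ together with the point $(x,y)$ and the map $r\cdot p_1\colon P\times Q\to H$ (where $p_1$ is the first projection), so that $f = (r\cdot p_1)\cdot\langle\varphi,\psi\rangle$ holds on the nose.

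\textbf{Key steps.} First I would verify the factorization identity $f = (r\cdot p_1)\cdot\langle\varphi,\psi\rangle$: indeed $p_1\cdot\langle\varphi,\psi\rangle = \varphi$, so $(r\cdot p_1)\cdot\langle\varphi,\psi\rangle = r\cdot\varphi = f$. Second, $\langle\varphi,\psi\rangle$ is a homomorphism into the product group, and in the $og$-case it is continuous because both $\varphi$ and $\psi$ are; moreover $r\cdot p_1$ is always continuous, so the data are of the required type. Third, and this is the heart of the matter, I must show that $\{\langle\varphi,\psi\rangle^{-1}(W)\mid W\in\mathcal{N}(x,y)\}$ is a base of the filter $\mathcal{H}$ generated by $\mathcal{F}\cup\mathcal{G}$. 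Since the sets $U\times V$ with $U\in\mathcal{N}(x)$, $V\in\mathcal{N}(y)$ form a neighbourhood base of $(x,y)$ in $P\times Q$, it suffices to work with these, and one computes
\[
\langle\varphi,\psi\rangle^{-1}(U\times V) = \varphi^{-1}(U)\cap\psi^{-1}(V).
\]
As $U$ ranges over $\mathcal{N}(x)$ and $V$ over $\mathcal{N}(y)$, the left-hand sides range over a base of the filter generated by the union of a base of $\mathcal{F}$ and a base of $\mathcal{G}$, which is exactly a base of $\mathcal{H}$. The compatibility hypothesis ($A\cap B\neq\emptyset$ for all $A\in\mathcal{F}$, $B\in\mathcal{G}$) is precisely what guarantees that all these intersections are nonempty, so $\mathcal{H}$ is a proper filter and hence a genuine $(o)g$-filter for $f$.

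\textbf{Main obstacle.} The only real subtlety is bookkeeping around properness and bases: one must be careful that "the filter generated by $\mathcal{F}\cup\mathcal{G}$" has the sets $\varphi^{-1}(U)\cap\psi^{-1}(V)$ as a base — this uses that $\mathcal{F}$ and $\mathcal{G}$ individually have the displayed sets as bases, that finite intersections within each family stay in the family (up to refinement, since $\mathcal{N}(x)$ and $\mathcal{N}(y)$ are closed under finite intersection), and that compatibility makes the cross-intersections nonempty so no empty set creeps in. There is also a minor point worth a remark: by Remark \ref{remark g-filter}(2) one may assume $\varphi(G)$ dense in $P$ and $\psi(G)$ dense in $Q$, but density of $\langle\varphi,\psi\rangle(G)$ in $P\times Q$ need not follow — however, this is not needed, since properness of $\mathcal{H}$ is already secured by compatibility, and Remark \ref{remark g-filter}(2) can be reapplied afterwards by replacing $P\times Q$ with the closure of the image if one wishes to normalize. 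I would mention this but not belabor it, as the lemma as stated does not require the density normalization.
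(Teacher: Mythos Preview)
Your proof is correct and follows exactly the same approach as the paper: the witness for $\mathcal{H}$ is the diagonal $\langle\varphi,\psi\rangle\colon G\to P\times Q$ together with the point $(x,y)$ and the map $r\cdot p_1\colon P\times Q\to H$, and the base of $\mathcal{H}$ is identified with $\{\langle\varphi,\psi\rangle^{-1}(W)\mid W\in\mathcal{N}((x,y))\}$. Your write-up is in fact more detailed than the paper's own proof, which simply records the commuting triangle and the resulting base without further comment.
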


\begin{proof}
We have $r, \varphi, x$ as in Definition \ref{definition g-filter}(1) and $\psi: G\to Q, y \in Q$ such that
$\{\psi^{-1}(V) \mid V \in \mathcal{N}(y)\}$ is a base of  $\mathcal{G}$. The diagram
\[
\xymatrix{
 G \ar[rr]^{\langle \varphi,\psi \rangle}\ar[dr]_<<<<<{f}& & {P\times Q} \ar[dl]^<<<<<{r\cdot p_1}\\
 & H  &}
\]
 commutes, and the filter $\mathcal H$ is actually generated by
 $\{\langle \varphi,\psi \rangle^{-1}(W) \mid W \in \mathcal{N}((x,y))\}$.
   \end{proof}

   \begin{corollary}\label{corollary maximal}
   Every (o)g-filter for $f:G\to H$ is contained in a maximal (o)g-filter for $f$.
   \end{corollary}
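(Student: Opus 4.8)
The plan is to mimic the standard Zorn's-lemma argument for maximal filters, but carried out within the class of (o)g-filters for $f$, using Lemma \ref{lemma maximal} to guarantee that unions of chains stay in this class. So let $\mathcal{F}$ be an (o)g-filter for $f:G\to H$, and consider the poset $\mathfrak{P}$ of all (o)g-filters for $f$ that contain $\mathcal{F}$, ordered by inclusion. This poset is non-empty since $\mathcal{F}\in\mathfrak{P}$, and an application of Zorn's Lemma will produce the desired maximal element once we verify that every chain in $\mathfrak{P}$ has an upper bound in $\mathfrak{P}$.

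First I would handle a chain $(\mathcal{F}_i)_{i\in I}$ in $\mathfrak{P}$. Since a directed union of filters is again a (proper) filter, $\mathcal{G}:=\bigcup_{i\in I}\mathcal{F}_i$ is a filter on $G$ containing $\mathcal{F}$; the key point is that it is again an (o)g-filter for $f$. Rather than re-deriving this from scratch, I would invoke Lemma \ref{lemma maximal}: each $\mathcal{F}_i$ is an (o)g-filter for $f$ that is compatible with $\mathcal{F}$ (indeed $\mathcal{F}\subseteq\mathcal{F}_i$, and any two members of $\mathcal{F}_i$ meet), but to get that $\mathcal{G}$ itself is an (o)g-filter for $f$ one needs a version of the lemma that applies to a whole chain, not just two filters. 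The cleanest route is to observe that the union of a chain is a filter and that being an (o)g-filter for $f$ is a property testable on a base; alternatively, one notes that $\mathcal{G}$ is compatible with each $\mathcal{F}_i$ and lies between $\mathcal{F}_i$ and the filter generated by $\mathcal{F}_i\cup\mathcal{G}$, which by Lemma \ref{lemma maximal} equals $\mathcal{G}$ and is an (o)g-filter for $f$. (If one prefers, the object-level statement that every (o)g-filter on $G$ sits inside a maximal one, quoted as Remark \ref{remark g-filter}(3) from \cite{DU,L}, is proved by exactly this kind of chain argument, and the present situation differs only in that the witnessing data $r,\varphi,x$ must be compatible with the fixed factorization $f=r\cdot\varphi$; Lemma \ref{lemma maximal} is precisely the tool that supplies this compatibility.)

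Once the chain condition is verified, Zorn's Lemma yields a maximal element $\mathcal{H}$ of $\mathfrak{P}$, and since every element of $\mathfrak{P}$ contains $\mathcal{F}$, this $\mathcal{H}$ is an (o)g-filter for $f$ containing $\mathcal{F}$ and maximal among all such. It remains to check that $\mathcal{H}$ is in fact maximal among \emph{all} (o)g-filters for $f$, not merely among those containing $\mathcal{F}$: if $\mathcal{H}\subseteq\mathcal{K}$ with $\mathcal{K}$ an (o)g-filter for $f$, then $\mathcal{K}\supseteq\mathcal{H}\supseteq\mathcal{F}$, so $\mathcal{K}\in\mathfrak{P}$ and maximality of $\mathcal{H}$ in $\mathfrak{P}$ forces $\mathcal{K}=\mathcal{H}$. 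This completes the argument.

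The main obstacle is the passage from the two-filter statement of Lemma \ref{lemma maximal} to the stability of the class of (o)g-filters for $f$ under unions of chains; this is routine but must be stated carefully, since the witnessing triple $(r,\varphi,x)$ changes along the chain and one must argue that a base for the union can still be described via a single homomorphism into a single group over $H$ (in the og-case, a continuous one). In practice this is exactly the content of the product construction in the proof of Lemma \ref{lemma maximal}, applied either directly or via the observation that a union of a chain of filters is determined by the union of bases, and it carries no genuine difficulty beyond bookkeeping.
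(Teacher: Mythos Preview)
Your Zorn's-lemma approach is workable in principle, but the chain step is not cleanly established. Your two proposed workarounds do not stand on their own: saying that ``being an (o)g-filter for $f$ is testable on a base'' is too vague (the definition requires the \emph{existence} of a base of a very particular form, not a property of an arbitrary base), and your application of Lemma~\ref{lemma maximal} to $\mathcal{F}_i$ and $\mathcal{G}=\bigcup_i\mathcal{F}_i$ is circular, since that lemma needs $\mathcal{G}$ to already be an (o)g-filter on $G$. What actually works is the infinite-product analogue of the construction in Lemma~\ref{lemma maximal}: with witnesses $(r_i,\varphi_i,x_i)$ for each $\mathcal{F}_i$, form $\varphi=\langle\varphi_i\rangle:G\to\prod_iP_i$, $r=r_{i_0}\cdot p_{i_0}$, $x=(x_i)$; because basic open neighbourhoods of $x$ restrict only finitely many coordinates and the $\mathcal{F}_i$ form a chain, the resulting filter is exactly $\bigcup_i\mathcal{F}_i$. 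You gesture at this (``the product construction \dots applied directly''), but it should be stated rather than left as ``bookkeeping''.

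The paper takes a genuinely different and shorter route that avoids redoing Zorn altogether. It observes that an (o)g-filter $\mathcal{F}$ for $f$ is in particular an (o)g-filter on $G$ (Remark~\ref{remark g-filter}(1)), so by the object-level result Remark~\ref{remark g-filter}(3) it is contained in a maximal (o)g-filter $\mathcal{G}$ on $G$. A single application of Lemma~\ref{lemma maximal} (with $\mathcal{F}$ the (o)g-filter for $f$ and $\mathcal{G}$ the compatible (o)g-filter on $G$) shows that $\mathcal{G}$, being the filter generated by $\mathcal{F}\cup\mathcal{G}$, is itself an (o)g-filter for $f$; maximality among (o)g-filters on $G$ then trivially gives maximality among (o)g-filters for $f$. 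The gain is that the chain condition, which is where your argument stumbles, is entirely absorbed into the cited object-level result, and only the binary Lemma~\ref{lemma maximal} is needed.
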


\begin{proof}
An (o)g-filter $\mathcal F$ for $f$ is an (o)g-filter on $G$ and therefore contained in a maximal (o)g-filter $\mathcal{G}$ on $G$ (see Remark \ref{remark g-filter}). By Lemma \ref{lemma maximal} $\mathcal G$ is actually an (o)g-filter for $f$ and certainly maximal also as such.
\end{proof}

\begin{proposition}\label{g-filter transfer}
Consider a g-filter $\mathcal F$ for $f$ and a commutative diagram
\[\bfig
\square<500,400>[G`M`H`N;{\pi}`f`{g}`{h}]
\efig
\]
in which the (not necessarily continuous) homomorphism $\pi$ is surjective and $N$ is Hausdorff.
Then the image $\pi(\mathcal{F})$ (generated by $\{\pi (A) \mid A \in \mathcal F \}$)  is a g-filter for $g$. Moreover,  if
$\mathcal F$ is an og-filter for $f$ and $\pi$ a topological quotient map, then $\pi(\mathcal{F})$ is an og-filter for $g$. Maximality transfers from $\mathcal F$ to $\mathcal G$ in both the g-filter and the og-filter case.
\end{proposition}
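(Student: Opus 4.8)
The plan is to manufacture witnessing data $(P',\varphi',r',x')$ for $\pi(\mathcal F)$ by passing to a suitable quotient of the data $(P,\varphi,r,x)$ exhibiting $\mathcal F$ as a g-filter for $f$; by Remark \ref{remark g-filter}(2) we may assume $\varphi(G)$ dense in $P$. Put $L:=\ker\pi\trianglelefteq G$ and $D:=\overline{\varphi(L)}\leq P$. The first point is that $D\trianglelefteq P$: since $\varphi(L)$ is normal in $\varphi(G)$, the set $\{p\in P\mid pDp^{-1}\subseteq D\}=\bigcap_{d\in D}\{p\mid pdp^{-1}\in D\}$ is closed and contains the dense subgroup $\varphi(G)$, hence is all of $P$; likewise for inverses. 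Let $q\colon P\to P':=P/D$ be the (open, continuous) quotient homomorphism and $x':=q(x)$. Since $\varphi(L)\subseteq D=\ker q$, the homomorphism $q\cdot\varphi$ kills $L$ and so factors as $q\cdot\varphi=\varphi'\cdot\pi$ for a homomorphism $\varphi'\colon M\to P'$. From $h\cdot r\cdot\varphi=h\cdot f=g\cdot\pi$ one gets $\varphi(L)\subseteq\ker(h\cdot r)$; as $N$ is Hausdorff --- the sole place this is needed --- $\ker(h\cdot r)$ is closed, so $D=\overline{\varphi(L)}\subseteq\ker(h\cdot r)$ and $h\cdot r$ factors as $r'\cdot q$ with $r'\colon P'\to N$ continuous. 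Then $r'\cdot\varphi'\cdot\pi=g\cdot\pi$, whence $g=r'\cdot\varphi'$ by surjectivity of $\pi$; and in the og-filter situation $\varphi$ is continuous and, $\pi$ being a topological quotient map by hypothesis, $q\cdot\varphi$ is continuous, forcing $\varphi'$ continuous.

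The substantive step is to verify that $\{(\varphi')^{-1}(U')\mid U'\in\mathcal N(x')\}$ is a base of $\pi(\mathcal F)$. From $\varphi'(\pi(a))=q(\varphi(a))$ one obtains $(\varphi')^{-1}(U')=\pi(\varphi^{-1}(q^{-1}(U')))$, which belongs to $\pi(\mathcal F)$ since $q^{-1}(U')\in\mathcal N(x)$; this gives one inclusion. For the reverse, given $U\in\mathcal N(x)$ I would take $U':=q(U)\in\mathcal N(x')$ (using that $q$ is open) and show $(\varphi')^{-1}(U')\subseteq\pi(\varphi^{-1}(U))$. Because $q^{-1}(U')=UD$, it suffices to see that $\varphi(a)\in UD$ implies $\pi(a)\in\pi(\varphi^{-1}(U))$: writing $\varphi(a)=uw$ with $u\in U$ and $w\in D=\overline{\varphi(L)}$, we have $u=\varphi(a)w^{-1}\in\varphi(a)\overline{\varphi(L)}=\overline{\varphi(aL)}$, so the open set $U$ meets the closure of $\varphi(aL)$ and hence meets $\varphi(aL)$ itself; choosing $\ell\in L$ with $\varphi(a\ell)\in U$ yields $\pi(a)=\pi(a\ell)\in\pi(\varphi^{-1}(U))$. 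By Definition \ref{definition g-filter}(1) this shows $\pi(\mathcal F)$ is a g-filter for $g$, and an og-filter for $g$ when $\varphi'$ is continuous.

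For the maximality assertion, suppose $\mathcal F$ is a maximal (o)g-filter for $f$ and let $\mathcal G'\supseteq\pi(\mathcal F)$ be an (o)g-filter for $g$, hence --- by Remark \ref{remark g-filter}(1) --- an (o)g-filter on $M$ with base $\{\psi^{-1}(V)\mid V\in\mathcal N(y)\}$ for a homomorphism $\psi\colon M\to Q$ (continuous in the og case). Then $\psi\cdot\pi\colon G\to Q$ is a homomorphism (continuous in the og case, as $\pi$ is then a quotient map), so the filter $\pi^{-1}(\mathcal G')$ on $G$ with base $\{(\psi\cdot\pi)^{-1}(V)\mid V\in\mathcal N(y)\}$ is an (o)g-filter on $G$. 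For $A\in\mathcal F$, $B\in\mathcal G'$ the set $\pi(A)\cap B\in\mathcal G'$ is nonempty, so picking $a\in A$ with $\pi(a)\in B$ gives $A\cap\pi^{-1}(B)\neq\emptyset$; thus the filter generated by $\mathcal F\cup\pi^{-1}(\mathcal G')$ is proper, and by Lemma \ref{lemma maximal} it is an (o)g-filter for $f$ containing $\mathcal F$, hence equals $\mathcal F$. Therefore $\pi^{-1}(\mathcal G')\subseteq\mathcal F$, and applying $\pi$ (with $\pi(\pi^{-1}(B))=B$ for $B\in\mathcal G'$) gives $\mathcal G'\subseteq\pi(\mathcal F)$, so $\mathcal G'=\pi(\mathcal F)$. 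Hence $\pi(\mathcal F)$ is maximal as an (o)g-filter for $g$.

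The step I expect to be most delicate is the base computation: its one non-formal inclusion rests on the elementary fact that an open set meeting the closure of a subset must meet that subset, applied to the coset $\varphi(aL)$, and it is essential to quotient by $D=\overline{\varphi(L)}$ rather than by $\varphi(L)$ (which need not be normal in $P$, so $P/\varphi(L)$ need not be a topological group) while checking that this enlargement does not enlarge the resulting filter. In the maximality step the key move is to obtain $\pi^{-1}(\mathcal G')\subseteq\mathcal F$ by joining the preimage filter with $\mathcal F$ and invoking Lemma \ref{lemma maximal}, rather than attempting the (generally false) inclusion $\mathcal F\subseteq\pi^{-1}(\mathcal G')$.
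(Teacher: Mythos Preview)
Your proof is correct and follows essentially the same route as the paper's: both quotient $P$ by the closure of $\varphi(\ker\pi)$ to produce the witnessing data, establish the base identity via the same underlying fact (your ``open set meets a closure iff it meets the set'' is exactly the paper's $U\overline{A}=UA$ for open $U$, applied to $A=\varphi(L)$), and handle maximality by pulling back along $\pi$. Your maximality paragraph is in fact more explicit than the paper's, which compresses the argument to ``$\mathcal{F}\subseteq\pi^{-1}(\mathcal{H})$ implies $\mathcal{G}=\mathcal{H}$ by maximality of $\mathcal{F}$'' and leaves the invocation of Lemma~\ref{lemma maximal} implicit; your version spells out the compatibility check and the use of that lemma.
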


\begin{proof}
We have $r, \varphi, x$ as in Definition \ref{definition g-filter}(1) and may assume $P=\overline{\varphi(G)}$.
For $K$ the kernel of $\pi$, its image $\varphi(K)$ under $\varphi$ is normal in $\varphi(G)$, and then $L:= \overline{\varphi(K)}$ is normal in $P$ (see Cor. 1.19 of \cite{L}). So, we can form the group $Q:=P/L$ and provide it with the quotient topology, making the projection $p:P\to Q$ open. Since $p(\varphi(K))=\{e_Q\}$ there is $\psi:M\to Q$ (not necessarily continuous) with
$\psi \cdot \pi = p\cdot \varphi$, and since
\[ h(r(\overline{\varphi (K)})) \subseteq \overline{h(r(\varphi(K)))}= \overline{h(f(K))}= \overline{g(\pi(K))}=\overline{\{e_N\}}=\{e_N\} \]
(as $N$ is Hausdorff), there is $s:Q\to N$ with $s\cdot p=h\cdot r$ and, consequently, $s\cdot \psi =g$.
  \[\bfig
\square<1200,800>[G`M`H`N;\pi`f`g`h]
\Dtriangle<300,400>[G`P`H;f`\varphi`r]
\Ctriangle(900,0)<300,400>[M`Q`N;\psi``s]
\morphism(300,400)<600,0>[P`Q;p]
\efig\]

We now show that $\pi(\mathcal{F})$ generated by  $\{\pi(\varphi^{-1}(U))\mid U \in \mathcal{N}(x) \}$ coincides with the g-filter $\mathcal G$ for $g$ generated by the base $\{\psi^{-1}(V)\mid V \in \mathcal{N}(p(x)) \}$. One certainly has $\mathcal{G}\subseteq \pi(\mathcal{F})$ since
$\psi^{-1}(V)=\pi(\pi^{-1}(\psi^{-1}(V)))=\pi(\varphi^{-1}(p^{-1}(V)))$
for all $V\in\mathcal{N}(p(x))$. For the converse inclusion, let $U\in\mathcal{N}(x)$. Since $U\overline{A}=UA$ for any subset $A\subseteq P$, one has in particular $p^{-1}(p(U))=UL=U\varphi(K)$ and then
\[
\psi^{-1}(p(U))=\pi(\pi^{-1}(\psi^{-1}(p(U))))=\pi(\varphi^{-1}(p^{-1}(p(U))))=\pi(\varphi^{-1}(U\varphi(K)))=\pi(\varphi^{-1}(U))
\]
with $p(U)\in \mathcal{N}(p(x))$.

Clearly, when $\pi$ is a topologial quotient map and $\varphi$ is continuous, so is $\psi$. Hence, when $\mathcal{F}$ is an og-filter for $f$, $\mathcal{G}$ is one for $g$. Also, if $\mathcal F$ is maximal, whether it be as a g-filter or an og-filter for $f$, and if $\mathcal H$ is an (o)g-filter for $g$ with $\mathcal{G}=\pi(\mathcal{F})\subseteq \mathcal{H}$,
then $\mathcal{F}\subseteq \pi^{-1}(\mathcal{H})$, which implies $\mathcal{G}=\mathcal{H}$ by maximality of $\mathcal{F}$.
   \end{proof}

   \begin{theorem} \label{c-proper filter char}
The following assertions for $f:G\to H$ with $H$ Hausdorff are equivalent:
\begin{itemize}
\item[\rm (i)] $f$ is c-proper;
\item[\rm (ii)] for all $S\leq G$, every g-filter for $f|_S$ has a cluster point in $G$;
\item[\rm (iii)] for all $S\leq G$, every maximal g-filter for $f|_S$ converges in $G$.
\end{itemize}
\end{theorem}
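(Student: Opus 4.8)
The plan is to prove the cycle $\mathrm{(i)}\Rightarrow\mathrm{(iii)}\Rightarrow\mathrm{(ii)}\Rightarrow\mathrm{(i)}$. The step $\mathrm{(iii)}\Rightarrow\mathrm{(ii)}$ is immediate from Corollary~\ref{corollary maximal}: a g-filter $\mathcal F$ for $f|_S$ is contained in a maximal one, which by (iii) converges to some $a\in G$, and then every neighbourhood of $a$ meets every member of $\mathcal F$, so $a$ clusters $\mathcal F$.

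For $\mathrm{(i)}\Rightarrow\mathrm{(iii)}$ I would fix a maximal g-filter $\mathcal F$ for $f|_S$, witnessed by $r\colon P\to H$, a (possibly discontinuous) homomorphism $\varphi\colon S\to P$ and $x\in P$ with $f|_S=r\cdot\varphi$, assuming $\overline{\varphi(S)}=P$ by Remark~\ref{remark g-filter}(2). Forming the pullback $G\times_H P$ of $f$ along $r$, with projections $q_1,q_2$, c-propriety of $f$ makes $q_2$ c-closed. The graph $\Gamma=\{(s,\varphi(s))\mid s\in S\}$ is a subgroup of $G\times_H P$, so $q_2(\overline\Gamma)$ is closed in $P$ and contains the dense set $\varphi(S)$; hence $q_2(\overline\Gamma)=P\ni x$, giving $a\in G$ with $(a,x)\in\overline\Gamma$. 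Since each basic neighbourhood $(V\times U)\cap(G\times_H P)$ of $(a,x)$ meets $\Gamma$, we get $V\cap\varphi^{-1}(U)\neq\emptyset$ for all $V\in\mathcal N(a)$, $U\in\mathcal N(x)$, i.e.\ $a$ is a cluster point of $\mathcal F$ in $G$. To promote this to convergence I would use maximality: the restriction of $G\times_H P\to H$ to $\overline\Gamma$, together with the homomorphism $S\to\overline\Gamma$, $s\mapsto(s,\varphi(s))$, and the point $(a,x)$, witnesses that the filter with base $\{V\cap\varphi^{-1}(U)\mid V\in\mathcal N(a),\,U\in\mathcal N(x)\}$ is again a g-filter for $f|_S$; it contains $\mathcal F$ and is proper, hence equals $\mathcal F$, so every $V\in\mathcal N(a)$ contains a member of $\mathcal F$.

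The substantive direction is $\mathrm{(ii)}\Rightarrow\mathrm{(i)}$. By Definition~\ref{basic definition}(2) it suffices to show that, for every $g\colon K\to H$, the pullback projection $q_2\colon G\times_H K\to K$ is c-closed; so I would take a closed $B\le G\times_H K$, suppose $q_2(B)$ is not closed with $y\in\overline{q_2(B)}\setminus q_2(B)$, and (replacing $K$ by $\overline{q_2(B)}$ and $g$ by its restriction) assume $q_2(B)$ dense in $K$. As $H$ is Hausdorff, $G\times_H K$ is closed in $G\times K$, so $B$ is closed there. Put $S:=q_1(B)\le G$ and $D_K:=\{d\in K\mid(e_G,d)\in B\}$; then $D_K\subseteq\ker g$ (hence $L:=\overline{D_K}\subseteq\ker g$, as $\ker g$ is closed), and $L$ is normal in $K$ since $D_K$ is normalized by the dense subgroup $q_2(B)$. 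With $\pi\colon K\to P:=K/L$ the open projection and $r\colon P\to H$ the induced map, define $\varphi\colon S\to P$ by $\varphi(s):=\pi(k)$ for any $(s,k)\in B$; this is a well-defined homomorphism with $r\cdot\varphi=f|_S$, and with $x:=\pi(y)$ the filter $\mathcal F$ on $S$ with base $\{\varphi^{-1}(\pi(V))\mid V\in\mathcal N(y)\}$ is a g-filter for $f|_S$, proper because $q_2(B)$ is dense. Using $\{e_G\}\times L\subseteq B$ (the closure of $\{e_G\}\times D_K\subseteq B$), one computes that $a$ is a cluster point of $\mathcal F$ exactly when $B\cap(W\times V)\neq\emptyset$ for all $W\in\mathcal N(a)$, $V\in\mathcal N(y)$, i.e.\ when $(a,y)\in\overline B=B$; since $y\notin q_2(B)$ no such $a$ exists, contradicting (ii).

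I expect the main obstacle to be exactly this last construction: a closed subgroup $B\le G\times_H K$ need not be a graph, so one must pass to $K/\overline{D_K}$ to extract a genuine homomorphism $\varphi\colon S\to P$ whose associated g-filter ``aims at $y$'', and --- since we do not assume $G$ (or $K$) Hausdorff --- one must be careful about which closures legitimately lie inside $B$, in particular that $\{e_G\}\times\overline{D_K}\subseteq B$ and that $\overline{D_K}$ is normal in $K$. The remaining points are the routine verifications that the exhibited data meet Definition~\ref{definition g-filter}(1) and that the displayed filters are proper.
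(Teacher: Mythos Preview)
Your proof is correct and follows the same overall architecture as the paper's, with two packaging differences worth noting.

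For (i)$\Rightarrow$(iii) you work directly in the pullback $G\times_H P$ (whose second projection is c-closed by Definition~\ref{basic definition}(2)), whereas the paper uses the equivalent c-closedness of $f\times 1_P:G\times P\to H\times P$ from Proposition~\ref{traditional criterion}; and you fold the promotion ``cluster point $\Rightarrow$ convergence under maximality'' into the same step via the auxiliary g-filter witnessed by $\overline\Gamma$, while the paper treats (ii)$\Leftrightarrow$(iii) separately through Lemma~\ref{lemma maximal} and Corollary~\ref{corollary maximal}. These are interchangeable.

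For (ii)$\Rightarrow$(i) the mathematical content coincides but the presentation differs more visibly. Your explicit construction of $P=K/\overline{D_K}$ and $\varphi:S\to P$ is exactly the output of Proposition~\ref{g-filter transfer} when that proposition is applied to the og-filter on $B$ with base $\{q_2^{-1}(V)\cap B\mid V\in\mathcal N(y)\}$ and to the surjection $q_1|_B:B\to q_1(B)=S$ (with $N=H$ Hausdorff): there $D_K$ is the kernel of $q_1|_B$ projected to $K$, and your $L=\overline{D_K}$ is the paper's $\overline{\varphi(K)}$. The paper simply invokes Proposition~\ref{g-filter transfer} and works with the pushed-forward filter $q_1(\mathcal F)$, which spares the verifications you carry out by hand (well-definedness of $\varphi$, normality of $L$ in $K$, the inclusion $\{e_G\}\times L\subseteq B$, and the equivalence $W\cap\varphi^{-1}(\pi(V))\neq\emptyset\Leftrightarrow B\cap(W\times V)\neq\emptyset$). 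Your route is self-contained; the paper's is shorter and modular, reusing a lemma that is needed again for the product theorem in Section~5.
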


\begin{proof}
(i)$\Rightarrow$(ii): For $S\leq G$ and a g-filter $\mathcal F$ for $f|_S$ one has $\varphi: S\to P,  r:P\to H$ and $x\in P$ with $r\cdot \varphi = f\cdot j$, and $\mathcal F$ generated by $\{\varphi^{-1}(U) \mid U\in \mathcal{N}(x) \}$; here $j$ is the inclusion map of $S$ into $G$, and $\varphi(S)$ may be assumed to be dense in $P$. Let
$T= \{(z,\varphi(z)) \mid z\in S\}$ be the image of $S$ in $G\times P$ under $\langle j,\varphi \rangle$. Continuity of
$\langle r,1_P \rangle: P \to H\times P$ and c-closedness of $f\times 1_P$ then give

$\langle r,1_P\rangle (P) \subseteq \overline{\langle r,1_P \rangle (\varphi (S))}=\overline{\langle f\cdot j,\varphi \rangle(S)}=\overline{(f\times 1_P)\cdot \langle j,\varphi \rangle (S)}=(f\times 1_P)(\overline{T})$.
\[\bfig
\square/<-`->`->`<-/<600,500>[G`G\times P`H`H\times P;p_1`f`f\times 1_P`q_1]
\square(600,0)<600,500>[G\times P`P`H\times P`P;p_2``1_P`q_2]
\Atrianglepair(0,500)/_(->`->`->`<-`->/<600,400>[S`G`G\times P`P;j`\langle j,\varphi\rangle`\varphi``]
\efig\]
Therefore, $(r(x),x)\in (f\times 1_P)(\overline{T})$, and we obtain $a\in G$ with $(a,x)\in \overline{T}$ and $f(a)=r(x)$. We claim that $a$ is a cluster point of $\mathcal F$, i.e., that $a\in \overline{\varphi^{-1}(U)}$ for
every $U\in \mathcal{N}(x)$. Indeed, for every $W\in \mathcal{N}(a)$ we have $(a,x)\in (W \times U)\cap T$ and,
consequently, $W\cap \varphi^{-1}(U)\neq \emptyset$.

(ii)$\Leftrightarrow$(iii) follows easily with Lemma \ref{lemma maximal} and Corollary \ref{corollary maximal}.

(ii)$\Rightarrow$(i): For a pullback square as in Definition \ref{basic definition}(2) we consider $S \leq G\times_H K$
closed and must show that $p_2(S)\leq K$ is also closed. Hence, we let $x\in \overline{p_2(S)}$ and form the filter
$\mathcal{F}$ on $S$ generated by $\{p_2^{-1}(U)\cap S \mid U\in \mathcal{N}(x)\}$. Clearly, $\mathcal F$ is a g-filter for $p_2|_S$, and with Proposition \ref{g-filter transfer} applied to
\[\bfig
\square<500,400>[S`p_1(S)`K`H;{p_1|S}`{p_2|_S}`{f|_{p_1(S)}}`{g}]
\efig
\]
we obtain that $p_1(\mathcal{F})$ is a g-filter for $f|_{p_1(S)}$. (Note that, while $\mathcal F$ is actually an og-filter, $p_1(\mathcal{F})$ may not be one.) By hypothesis then, $p_1(\mathcal{F})$ has a cluster point $a$ in $G$.
Since $S$ is closed, it now suffices to show $(a,x)\in \overline{S}$. Indeed, one obtains $(W\times U)\cap S\neq \emptyset$ for all $W\in \mathcal{N}(a), U\in \mathcal{N}(x)$, since $W\cap p_1(p_2^{-1}(U)\cap S)\neq \emptyset$, by virtue of $a$ being a cluster point of $p_1(\mathcal{F})$.
\end{proof}

The Theorem returns the known result at the object level (see \cite{DU, L}), but without the assumption of Hausdorffness:

  \begin{corollary} \label{c-compact filter char}
The following assertions for a topological group $G$ are equivalent:
\begin{itemize}
\item[\rm (i)] $G$ is c-compact;
\item[\rm (ii)] for all $S\leq G$, every g-filter on $S$ has a cluster point in $G$;
\item[\rm (iii)] for all $S\leq G$, every maximal g-filter on $S$ converges in $G$.
\end{itemize}
\end{corollary}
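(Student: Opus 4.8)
The plan is to derive Corollary \ref{c-compact filter char} as the special case $H = 1$ of Theorem \ref{c-proper filter char}. First I would note that $G$ is c-compact precisely when $G \to 1$ is c-proper (Definition \ref{basic definition}(4)), and that the trivial group $1$ is Hausdorff, so the hypothesis of Theorem \ref{c-proper filter char} is satisfied. Then I would observe that for $f: G \to 1$ and any $S \le G$, the restriction $f|_S$ is simply $S \to 1$, so a g-filter for $f|_S$ is exactly a g-filter on $S$ in the sense of Definition \ref{definition g-filter}(3): the factorization data $r: P \to 1$, $\varphi: S \to P$, $x \in P$ required in part (1) reduces to just $\varphi: S \to P$ and $x \in P$, since $r$ is forced to be the unique map to $1$ and imposes no condition. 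Hence conditions (ii) and (iii) of the Theorem translate verbatim into conditions (ii) and (iii) of the Corollary.

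The only genuine verification needed is that the notions "g-filter for $f|_S$" and "g-filter on $S$" coincide when $H = 1$; this is immediate from the definitions, but worth stating explicitly. With that identification in hand, the three equivalences (i)$\Leftrightarrow$(ii)$\Leftrightarrow$(iii) of Theorem \ref{c-proper filter char}, specialized to $f = (G \to 1)$, give exactly the three equivalences of the Corollary. I expect no real obstacle here: the whole content is the recognition that Corollary \ref{c-compact filter char} is literally the $H=1$ instance of the preceding Theorem, together with the trivial remark that $1$ is Hausdorff so that no Hausdorffness hypothesis on $G$ is needed — which is precisely the improvement over the object-level results of \cite{DU, L} that the surrounding text advertises. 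The proof is therefore a single sentence invoking Theorem \ref{c-proper filter char} with $H = 1$.

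\begin{proof}
Apply Theorem \ref{c-proper filter char} to $f: G\to 1$, noting that $1$ is Hausdorff, that $G$ is c-compact if and only if $G\to 1$ is c-proper, and that for every $S\le G$ a g-filter for $(G\to 1)|_S = (S\to 1)$ is precisely a g-filter on $S$ (the datum $r:P\to 1$ in Definition \ref{definition g-filter}(1) being vacuous). The three conditions of Theorem \ref{c-proper filter char} then become the three conditions listed above.
\end{proof}
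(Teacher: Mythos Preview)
Your proposal is correct and takes essentially the same approach as the paper: the paper does not give a separate proof of this Corollary at all, merely introducing it with the remark that ``The Theorem returns the known result at the object level,'' which is exactly the specialization $H=1$ that you carry out. Your explicit identification of g-filters for $(S\to 1)$ with g-filters on $S$ and the observation that $1$ is Hausdorff are the only points worth spelling out, and you have done so.
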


The corresponding characterization for h-complete morphisms is obtained quite easily once one has taken note of the
following Lemma utilizing the {\em Hausdorff reflection} of a topological group $G$, facilitated by the projection $\pi_G: G\to G/\overline{\{e_G\}}$.

\begin{lemma} \label{hausdorff lemma}
For $f: G\to H$ with $H$ Hausdorff and a g-filter $\mathcal{F}$ for $f$, there exist a Hausdorff group $Q$, $\psi: G\to Q$ and $k: Q\to H$ such that $f= k\cdot \psi$ and  $\mathcal{F}$ is generated by $\{\psi^{-1}(W) \mid W \in \mathcal{N}(y)\}$ for some $y \in Q$. When $\mathcal F$ is an og-filter for $f$, then $\psi$ may be chosen to be continuous.
\end{lemma}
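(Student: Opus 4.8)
The plan is to take the data $r\colon P\to H$, $\varphi\colon G\to P$, $x\in P$ witnessing that $\mathcal F$ is a g-filter for $f$ (so $f=r\cdot\varphi$ and $\{\varphi^{-1}(U)\mid U\in\mathcal N(x)\}$ is a base of $\mathcal F$), and to push everything down to the Hausdorff reflection of $P$. Concretely, let $Q:=P/\overline{\{e_P\}}$ with its quotient topology and $p:=\pi_P\colon P\to Q$ the canonical open, continuous projection; $Q$ is the Hausdorff reflection of $P$, hence a Hausdorff group. Set $\psi:=p\cdot\varphi\colon G\to Q$ and $y:=p(x)\in Q$. Since $p$ is continuous, if $\varphi$ is continuous then so is $\psi$, which gives the last sentence of the Lemma.

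Next I would produce the factorizing map $k\colon Q\to H$. Because $H$ is Hausdorff and $\overline{\{e_P\}}$ is a subgroup of $P$ whose $r$-image is contained in $\overline{\{e_H\}}=\{e_H\}$ — indeed $r(\overline{\{e_P\}})\subseteq\overline{r(\{e_P\})}=\overline{\{e_H\}}=\{e_H\}$ by continuity of $r$ — the homomorphism $r$ kills $\overline{\{e_P\}}$, so by the universal property of the quotient there is a unique (continuous, since $p$ is a topological quotient map) homomorphism $k\colon Q\to H$ with $k\cdot p=r$. Then $k\cdot\psi=k\cdot p\cdot\varphi=r\cdot\varphi=f$, as required.

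It remains to check that $\mathcal F$ is generated by $\{\psi^{-1}(W)\mid W\in\mathcal N(y)\}$. The inclusion showing this family refines $\mathcal F$ is immediate: for $W\in\mathcal N(y)$, continuity of $p$ gives that $p^{-1}(W)\in\mathcal N(x)$, and $\psi^{-1}(W)=\varphi^{-1}(p^{-1}(W))$ lies in $\mathcal F$. For the reverse, given $U\in\mathcal N(x)$ I want $W\in\mathcal N(y)$ with $\psi^{-1}(W)\subseteq\varphi^{-1}(U)$; the natural candidate is $W:=p(U)$, which is open in $Q$ (as $p$ is open) and contains $y=p(x)$. The point is that $p^{-1}(p(U))=U\cdot\overline{\{e_P\}}=U$, using $\overline{\{e_P\}}\subseteq U$ whenever $U$ is an open neighbourhood of $x$ — equivalently $U\overline{A}=UA$ for $A\subseteq P$ — exactly the identity already invoked in the proof of Proposition \ref{g-filter transfer}. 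Hence $\psi^{-1}(p(U))=\varphi^{-1}(p^{-1}(p(U)))=\varphi^{-1}(U)$, so the two bases generate the same filter.

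The only mild subtlety — the ``main obstacle,'' such as it is — is the verification that $p^{-1}(p(U))=U$ for $U\in\mathcal N(x)$, i.e. that passing to the Hausdorff reflection does not enlarge neighbourhoods of $x$; this is where the identity $U\overline{A}=UA$ (with $A=\overline{\{e_P\}}$, so $\overline A=A$) does the work, and it is precisely the same computation used above, so no new difficulty arises. Everything else is a routine diagram chase, and one may additionally note (as in Remark \ref{remark g-filter}(2)) that $\psi(G)=p(\varphi(G))$ is dense in $Q$ when $\varphi(G)$ is dense in $P$.
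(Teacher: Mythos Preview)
Your approach is the same as the paper's: pass to the Hausdorff reflection $Q=P/\overline{\{e_P\}}$, factor $r$ through $Q$ using that $H$ is Hausdorff, and verify that the two neighbourhood systems pull back to the same filter on $G$. Your verification of the non-trivial inclusion is in fact slightly sharper than the paper's: you show $\psi^{-1}(p(U))=\varphi^{-1}(U)$ on the nose, whereas the paper first shrinks $U$ to $xW$ with $W^2\subseteq x^{-1}U$ and then obtains only $\psi^{-1}(\pi_P(xW))\subseteq\varphi^{-1}(U)$.

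There is one slip you should correct. The assertion ``$\overline{\{e_P\}}\subseteq U$ whenever $U$ is an open neighbourhood of $x$'' is false in general (for instance, take $P=\mathbb{R}\times\mathbb{Z}$ with $\mathbb{Z}$ indiscrete, $x=(1,0)$, $U=(\tfrac12,\tfrac32)\times\mathbb{Z}$; then $\overline{\{e_P\}}=\{0\}\times\mathbb{Z}\not\subseteq U$), and even if it held it would not by itself give $U\cdot\overline{\{e_P\}}=U$. The correct justification is the one you \emph{also} cite: the identity $U\overline{A}=UA$ for $U$ open, applied with $A=\{e_P\}$, yields $U\cdot\overline{\{e_P\}}=U\cdot\{e_P\}=U$ directly. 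Delete the incorrect aside and keep only this; the argument is then complete.
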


\begin{proof} The g-filter $\mathcal F$ for $f$ comes with $r, \varphi, x$ as in Definition \ref{definition g-filter}(1). Let $Q = P/\overline{\{e_P\}}$ be the Hausdorff reflection of $P$ with projection $\pi_P: P\to Q$.  Since $H$ is Hausdorff, $r:P\to H$ factors through $Q$ as $r=k\cdot\pi_P$, so that with $\psi:=\pi_P\cdot \varphi: G\to Q$ we have $f=k\cdot \psi$.  We claim that $\mathcal F$
is generated by $\{\psi^{-1}(Z) \mid Z\in \mathcal{N}(y)\}$, with $y:=\pi_P(x)$. Indeed, given $U\in \mathcal{N}(x)$, there exists $V \in  \mathcal{N}(e_{P})$ such that $xV \subseteq U$, and for $W \in  \mathcal{N}(e_{P}) $ with $ W^{2} \subseteq V$ we then have $\pi_P^{-1}(\pi_P(xW))=xW\overline{\{e_P\}} \subseteq x W^{2} \subseteq U$ and, consequently,
$\psi^{-1}(Z) \subseteq \varphi^{-1}(U)$ for $Z:=\pi_P(xW)$. This shows the non-trivial inclusion of the the set equality to be confirmed. \end{proof}

 \begin{proposition} \label{h-complete filter char}
The following assertions for $f:G\to H$ with $H$ Hausdorff are equivalent:
\begin{itemize}
\item[\rm (i)] $f$ is h-complete;
\item[\rm (ii)] every og-filter for $f$ has a cluster point in $G$;
\item[\rm (iii)] every maximal og-filter for $f$ converges in $G$.
\end{itemize}
\end{proposition}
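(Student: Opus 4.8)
The plan is to mimic the structure of the proof of Theorem \ref{c-proper filter char}, but working with og-filters rather than g-filters, and crucially exploiting Lemma \ref{hausdorff lemma} to replace the intermediate group $P$ by its Hausdorff reflection $Q$, so that ``images are closed'' statements about Hausdorff groups become available. The equivalence (ii)$\Leftrightarrow$(iii) should again follow formally: Corollary \ref{corollary maximal} guarantees that every og-filter for $f$ is contained in a maximal one, and a cluster point of an og-filter $\mathcal F$ is a limit point of any finer og-filter; so convergence of all maximal og-filters gives cluster points for all og-filters, and conversely a cluster point of a maximal og-filter is automatically a limit by maximality (one adjoins the neighbourhood filter of the cluster point, which stays an og-filter by Lemma \ref{lemma maximal}, and invokes maximality).

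For (i)$\Rightarrow$(ii): given an og-filter $\mathcal F$ for $f$, apply Lemma \ref{hausdorff lemma} to obtain a Hausdorff group $Q$, a \emph{continuous} $\psi: G\to Q$ and $k: Q\to H$ with $f=k\cdot\psi$ and $\mathcal F$ generated by $\{\psi^{-1}(W)\mid W\in\mathcal N(y)\}$ for some $y\in Q$; by Remark \ref{remark g-filter}(2) we may assume $\psi(G)$ is dense in $Q$. Since $f$ is h-complete and $Q$ is Hausdorff, the factorization $f=k\cdot\psi$ forces $\psi(G)\le Q$ to be closed, hence $\psi(G)=Q$, so $\psi$ is surjective. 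Now $y\in Q=\psi(G)$, so pick $a\in G$ with $\psi(a)=y$. I claim $a$ is a cluster point of $\mathcal F$: for $W\in\mathcal N(y)$ and $V\in\mathcal N(a)$, continuity of $\psi$ gives $\psi(V)\cap W$ a neighbourhood-sized set containing $y=\psi(a)$, and more to the point $a\in\psi^{-1}(W)$ already, so $V\cap\psi^{-1}(W)\ni a\neq\emptyset$; thus $a\in\overline{\psi^{-1}(W)}$ for all $W\in\mathcal N(y)$, i.e. $a$ is a cluster point of $\mathcal F$. (Here the point is simply that a surjective continuous map sending $a$ to $y$ makes $a$ a cluster point of the filter generated by preimages of neighbourhoods of $y$.)

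For (iii)$\Rightarrow$(i): consider any factorization $f=k\cdot h$ with $k: K\to H$ Hausdorff; we must show $h(G)\le K$ is closed. Let $x\in\overline{h(G)}$ and form the filter $\mathcal F$ on $G$ generated by $\{h^{-1}(U)\mid U\in\mathcal N(x)\}$ — this is proper since $x\in\overline{h(G)}$, and it is an og-filter for $f$, witnessed by $h: G\to K$ (continuous!) and $k: K\to H$ with $x\in K$. Extend $\mathcal F$ to a maximal og-filter $\mathcal G$ for $f$ (Corollary \ref{corollary maximal}); by (iii), $\mathcal G$ converges to some $a\in G$. Then $\mathcal F\subseteq\mathcal G\to a$ gives, for every $U\in\mathcal N(x)$ and $V\in\mathcal N(a)$, that $V\cap h^{-1}(U)\in\mathcal G$ is nonempty, so $h(V)\cap U\neq\emptyset$; since $h$ is continuous, letting $V$ run over $\mathcal N(a)$ shows $h(a)\in\overline{\{h(a)\}}\cap$ every $U\in\mathcal N(x)$, hence $h(a)\in\overline{\{x\}}$ and, $K$ being Hausdorff, $h(a)=x$. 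Thus $x\in h(G)$, proving $h(G)$ closed. The main obstacle — and it is a mild one given the machinery already developed — is keeping the (o)g-filter bookkeeping straight: verifying in (iii)$\Rightarrow$(i) that the naturally arising filter is genuinely an \emph{og}-filter (so that continuity of the witnessing homomorphism $h$ is available when we need it), and making sure that in (i)$\Rightarrow$(ii) we do not lose continuity of $\psi$ when passing to the Hausdorff reflection, which is exactly the last sentence of Lemma \ref{hausdorff lemma}. No Hausdorffness of $G$ is needed anywhere; only $H$ (and the intermediate $K$, $Q$) must be Hausdorff, as in the hypotheses.
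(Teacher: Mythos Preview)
Your proof is correct and follows essentially the same route as the paper's: Lemma~\ref{hausdorff lemma} to arrange a Hausdorff intermediate group for (i)$\Rightarrow$(ii), and extension to a maximal og-filter via Corollary~\ref{corollary maximal} plus Hausdorffness of $K$ to force $h(a)=x$ for (iii)$\Rightarrow$(i). The paper phrases the last step more cleanly as ``$h(\mathcal F)$ converges to both $h(a)$ and $x$, and $K$ is Hausdorff''; your neighbourhood argument reaches the same conclusion but the sentence ``$h(a)\in\overline{\{h(a)\}}\cap$ every $U\in\mathcal N(x)$'' is garbled (you have only shown $h(V)\cap U\neq\emptyset$, which gives $h(a)\in\overline{U}$, not $h(a)\in U$) --- still, regularity of the Hausdorff group $K$ rescues it, so there is no genuine gap.
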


\begin{proof}
(i)$\Rightarrow$(ii):   An og-filter $\mathcal F$ for $f$ comes with $r, \varphi, x$ as in Definition \ref{definition g-filter}(1), but with $\varphi: G\to P$ continuous. Since $H$ is Hausdorff, by Lemma \ref{hausdorff lemma} also $P$ may be assumed to be Hausdorff, so that also $r$ is Hausdorff and therefore $\varphi(G)$ closed, by hypothesis (i). Consequently, since $x\in \overline{\varphi(G)}$, one obtains $a\in G$ with $\varphi(a)=y$. Since $a\in \varphi^{-1}(V)$ for all $V\in \mathcal{N}(y)$, $a$ is
trivially a cluster point for $\mathcal F$.

(ii)$\Rightarrow$(iii) is obvious.

(iii)$\Rightarrow$(i): We assume $f=k\cdot h$ with $k: K\to H$ Hausdorff and consider $x \in \overline{h(G)}$. By Corollary \ref{corollary maximal}, the og-filter generated by $\{h^{-1}(U) \mid U\in \mathcal{N}(x)\}$ is contained in a maximal og-filter $\mathcal F$ for $f$ which, by hypothesis, converges to some $a\in G$. Since $h(\mathcal{F})$ converges to both $h(a)$ and $x$, and since with
$H$ and $k$ also $K$ is Hausdorff, we have $h(a)=x$ and therefore $x\in h(G)$.
\end{proof}

At the object level one again obtains the known characterization, without separation condition:

  \begin{corollary} \label{h-complete object filter char}
The following assertions for a topological group $G$ are equivalent:
\begin{itemize}
\item[\rm (i)] $G$ is h-complete;
\item[\rm (ii)] every og-filter on $G$ has a cluster point;
\item[\rm (iii)] every maximal og-filter on $G$ converges.
\end{itemize}
\end{corollary}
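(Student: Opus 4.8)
The plan is to deduce this object-level corollary directly from the morphism-level Proposition~\ref{h-complete filter char}, exactly as Corollary~\ref{c-compact filter char} was deduced from Theorem~\ref{c-proper filter char}. First I would apply Proposition~\ref{h-complete filter char} to the morphism $f = (G\to 1)$, noting that the terminal group $1$ is trivially Hausdorff so the hypothesis of the Proposition is met. By Definition~\ref{definition g-filter}(3) an og-filter on $G$ is precisely an og-filter for $G\to 1$, and by Remark~\ref{second remark}(3)/Definition~\ref{definition c-complete}(3), $G$ is h-complete if and only if $G\to 1$ is h-complete. So the three conditions of Proposition~\ref{h-complete filter char} applied to $G\to 1$ translate verbatim into the three conditions of the Corollary: $f$ h-complete becomes $G$ h-complete; every og-filter for $f$ having a cluster point in $G$ becomes every og-filter on $G$ having a cluster point; and every maximal og-filter for $f$ converging in $G$ becomes every maximal og-filter on $G$ converging.

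The only point that needs a word is the clause ``has a cluster point in $G$'' versus simply ``has a cluster point'': since the cluster point lives in $G$ in either reading, and convergence is convergence in $G$, there is no discrepancy — in the object case $G$ is the only ambient group in sight. I would also remark, as the text does for the c-compact case, that no separation hypothesis on $G$ itself is needed here: Hausdorffness was required only of the codomain, which is $1$.

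I do not anticipate any obstacle; the proof is a one-line specialization. I would write simply: \emph{Apply Proposition~\ref{h-complete filter char} to $f: G\to 1$, recalling that an og-filter on $G$ is, by definition, an og-filter for $G\to 1$, and that $1$ is Hausdorff.} If a fuller sentence is wanted, I would add that the equivalences (i)$\Leftrightarrow$(ii)$\Leftrightarrow$(iii) of the Corollary are literally the instances of (i)$\Leftrightarrow$(ii)$\Leftrightarrow$(iii) of the Proposition for this choice of $f$, since the defining property of h-completeness of an object is h-completeness of its unique map to $1$.

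\begin{proof}
Apply Proposition~\ref{h-complete filter char} to $f: G\to 1$: the group $1$ is Hausdorff, so the hypothesis holds; $G$ is h-complete precisely when $G\to 1$ is h-complete (Definition~\ref{definition c-complete}(3)); and an (o)g-filter on $G$ is, by Definition~\ref{definition g-filter}(3), exactly an (o)g-filter for $G\to 1$. Thus conditions (i), (ii), (iii) of the Proposition for $f=(G\to 1)$ are, respectively, conditions (i), (ii), (iii) of the present Corollary.
\end{proof}
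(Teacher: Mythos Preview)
Your proposal is correct and takes exactly the approach the paper intends: the paper gives no explicit proof for this corollary, but introduces it with the remark that ``at the object level one again obtains the known characterization, without separation condition,'' i.e., it is the specialization $H=1$ of Proposition~\ref{h-complete filter char}. Your identification of why the hypothesis ($1$ is Hausdorff) is satisfied and why the three conditions translate verbatim is precisely what is needed.
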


In analogy to Theorem \ref{c-proper filter char} we can now state the following theorem. The question whether its equivalent statements are actually weaker than those of Theorem \ref{c-proper filter char} remains open; see Question \ref{question}(3).

 \begin{theorem} \label{c-complete filter char}
The following assertions for $f:G\to H$ with $H$ Hausdorff are equivalent:
\begin{itemize}
\item[\rm (i)] for all closed $S\leq G$, $f|_S$ is h-complete;
\item[\rm (ii)] for all closed $S\leq G$, $f|_S$ is c-complete;
\item[\rm (iii)] for all closed $S\leq G$, every og-filter for $f|_S$ has a cluster point in $G$;
\item[\rm (iv)] for all closed $S\leq G$, every maximal og-filter for $f|_S$ converges in $G$.
\end{itemize}
\end{theorem}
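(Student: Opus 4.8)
The plan is to prove Theorem \ref{c-complete filter char} by combining the object-level/morphism-level machinery already in place, so that only one genuinely new implication has to be checked. The logical skeleton I would use is the cycle (i)$\Rightarrow$(ii)$\Rightarrow$(iii)$\Rightarrow$(iv)$\Rightarrow$(i), with the understanding that (i)$\Rightarrow$(ii) is already recorded in Remark \ref{second remark}(3) / Remark \ref{implications}(1) (applied fibrewise to each restriction $f|_S$), and that (iii)$\Leftrightarrow$(iv) is the routine maximality argument via Corollary \ref{corollary maximal}. So the substantive content is (ii)$\Rightarrow$(iii) and (iv)$\Rightarrow$(i), and both of these I would derive from Proposition \ref{h-complete filter char}.

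First, for (ii)$\Rightarrow$(iii): fix a closed $S\le G$. Since $f|_S$ is c-complete, it is in particular h-complete (Remark \ref{second remark}(3)). Now the codomain $H$ is Hausdorff, so Proposition \ref{h-complete filter char} applies verbatim to $f|_S : S\to H$: every og-filter for $f|_S$ has a cluster point in $S$, hence in $G$. That gives (iii). Conversely, for (iv)$\Rightarrow$(i): fix a closed $S\le G$; hypothesis (iv) says every maximal og-filter for $f|_S$ converges in $G$, but since $S$ is closed, a convergent filter on $S$ converges to a point of $S$, so in fact it converges in $S$. Thus the equivalent condition (iii) of Proposition \ref{h-complete filter char} holds for $f|_S$, whence $f|_S$ is h-complete. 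This is (i). (One should note explicitly that the ``in $G$'' vs.\ ``in $S$'' discrepancy is harmless precisely because $S$ is closed — this is the only place where closedness of $S$ is used in an essential, rather than bookkeeping, way, and it is worth flagging since Theorem \ref{c-proper filter char} carried $S$ ranging over all subgroups.)

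The one step that requires a little care — and the place I would expect a referee to look hardest — is making sure the passage to the restriction $f|_S$ is legitimate in both Proposition \ref{h-complete filter char} and Remark \ref{implications}(1)/(iii): the hypothesis there was stated for $f|_S$ with $S$ closed in $G$, and an og-filter ``for $f|_S$'' is by Definition \ref{definition g-filter} an og-filter for the map $S\to H$ obtained by composing the inclusion $S\hookrightarrow G$ with $f$; so everything is internally consistent, but one must resist the temptation to think of these as og-filters on $G$ supported on $S$. I would phrase (ii)$\Rightarrow$(iii) and (iv)$\Rightarrow$(i) so that Proposition \ref{h-complete filter char} is invoked with $S$ (Hausdorff codomain $H$) in the role of ``$G$'' there, and $f|_S$ in the role of ``$f$''. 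No new convergence argument is needed; the theorem is essentially a fibrewise repackaging of Proposition \ref{h-complete filter char} together with the implications of Remark \ref{implications}.

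Finally I would remark — as the paragraph preceding the theorem already hints — that the gap between this theorem and Theorem \ref{c-proper filter char} is exactly the gap between og-filters and g-filters (continuity of $\varphi$), which is why (and in what precise sense) condition (iii) of Remark \ref{implications} may conceivably be strictly weaker than c-propriety; pointing the reader to Question \ref{question}(3) closes the loop.
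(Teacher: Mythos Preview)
Your proposal is correct and follows essentially the same approach as the paper: both reduce everything to Proposition \ref{h-complete filter char} applied to each $f|_S$, together with the observation that cluster/convergence points in $G$ of a filter on a closed subgroup $S$ must lie in $S$, and the fact that closed subgroups of closed subgroups are closed (which yields (i)$\Leftrightarrow$(ii)). One small correction: your citation of Remark \ref{second remark}(3) for (i)$\Rightarrow$(ii) points the wrong way (that remark gives c-complete $\Rightarrow$ h-complete, i.e.\ (ii)$\Rightarrow$(i)); your other reference, Remark \ref{implications}(1)(iii)$\Rightarrow$(iv) applied to each $f|_S$, is the correct one and matches the paper's one-line justification.
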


\begin{proof}
The equivalence of (i) and (ii) follows directly from the definitions, as closed subgroups of closed subgroups are closed. The equivalence of (i), (iii), and (iv) follows from Proposition \ref{h-complete filter char} applied to $f|_S$ in lieu of $f$, with the trivial observation that cluster or convergence points in $G$ of a filter on the closed subgroup $S$ must necessarily belong to $S$.
\end{proof}

The case $H=1$ reads as follows:

\begin{corollary} \label{c-complete object filter char}
The following assertions for a topological group $G$ are equivalent:
\begin{itemize}
\item[\rm (i)] every closed subroup of $G$ is h-complete;
\item[\rm (ii)] every closed subgroup of $G$ is c-complete;
\item[\rm (iii)] every og-filter on a closed subgroup of $G$ has a cluster point in $G$;
\item[\rm (iv)] every maximal og-filter on a closed subgroup of $G$ converges in $G$.
\end{itemize}
\end{corollary}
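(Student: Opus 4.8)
The plan is to deduce this directly from Theorem \ref{c-complete filter char} by taking $H$ to be the trivial group $1$, which is (trivially) Hausdorff, so that the theorem applies without imposing any extra separation hypothesis on $G$. First I would record the translation of the morphism statements into object statements: for any $S\le G$, the restriction to $S$ of the unique morphism $G\to 1$ is precisely the unique morphism $S\to 1$, and hence, by Definition \ref{definition c-complete}(3), it is h-complete (respectively c-complete) if, and only if, $S$ itself is h-complete (respectively c-complete). This turns conditions (i) and (ii) of Theorem \ref{c-complete filter char} into conditions (i) and (ii) of the Corollary.

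Next I would observe that, by Definition \ref{definition g-filter}(3), an (o)g-filter for the morphism $S\to 1$ is by definition exactly an (o)g-filter on $S$; and the phrases ``cluster point in $G$" and ``converges in $G$" appearing in Theorem \ref{c-complete filter char}(iii),(iv) carry over unchanged, with the automatic proviso — already exploited in the proof of Theorem \ref{c-complete filter char} — that such a point, being a cluster or limit point of a filter supported on the closed subgroup $S$, must in fact lie in $S$. Thus conditions (iii) and (iv) of the theorem specialize verbatim to conditions (iii) and (iv) of the Corollary, and the four equivalences follow at once.

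Since the argument is a pure specialization, there is no genuine obstacle; the only point deserving a word of care is the bookkeeping identification ``(o)g-filter for $S\to 1$ $=$ (o)g-filter on $S$", which is immediate from Definition \ref{definition g-filter}, together with the observation that no Hausdorffness of $G$ is needed here because the codomain $1$ already supplies the separation hypothesis demanded by Theorem \ref{c-complete filter char}.
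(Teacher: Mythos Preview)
Your proposal is correct and matches the paper's own approach exactly: the paper simply introduces the Corollary with the phrase ``The case $H=1$ reads as follows,'' treating it as a direct specialization of Theorem~\ref{c-complete filter char}. Your careful unpacking of the translation (via Definitions~\ref{definition c-complete}(3) and~\ref{definition g-filter}(3)) makes explicit what the paper leaves implicit, but the argument is the same.
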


It's time for us to make good on Remark \ref{first remark}(4) and sharpen Proposition \ref{traditional criterion}, as follows:

\begin{corollary} \label{equivalence with DU}
{\rm(1)} $f:G\to H$ with $H$ Hausdorff is c-proper if, and only if, $f\times 1_K$ is c-closed for every Hausdorff group $K$.

{\rm(2)} $G$ is c-compact if, and only if, the projection $G\times K \to K$ is c-closed for every Hausdorff group $K$.
\end{corollary}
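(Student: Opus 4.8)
The plan is to derive both statements from the filter characterizations obtained above, since those already only require the codomain to be Hausdorff and never impose any separation hypothesis on the test group $K$. For (1), one implication is trivial: if $f$ is c-proper, then by Proposition \ref{c-proper BSP} (specifically Corollary \ref{necessary condition}) $f\times 1_K$ is c-proper, hence in particular c-closed, for every $K$ — Hausdorff or not. The content is the converse. So I assume $f\times 1_K$ is c-closed for every Hausdorff group $K$ and aim to verify condition (ii) of Theorem \ref{c-proper filter char}, namely that for every $S\le G$, every g-filter $\mathcal F$ for $f|_S$ has a cluster point in $G$. The key observation is that the proof of (i)$\Rightarrow$(ii) in Theorem \ref{c-proper filter char} uses c-closedness of $f\times 1_P$ only for the single group $P$ carrying the g-filter data $(r,\varphi,x)$; and by Lemma \ref{hausdorff lemma} the g-filter $\mathcal F$ for $f|_S$ can be re-presented via data $(\psi, k, y)$ in which the intermediate group $Q$ is \emph{Hausdorff}. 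Running the argument of Theorem \ref{c-proper filter char}(i)$\Rightarrow$(ii) with $Q$ in place of $P$ then only invokes c-closedness of $f\times 1_Q$ with $Q$ Hausdorff, which is exactly what the hypothesis supplies. This yields a cluster point in $G$, so $f$ is c-proper by Theorem \ref{c-proper filter char}.

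For (2), apply (1) with $H=1$: a trivial group is Hausdorff, and $f:G\to 1$ is c-proper precisely when $G$ is c-compact by Definition \ref{basic definition}(4). The displayed condition in (1), "$f\times 1_K$ c-closed for every Hausdorff $K$", becomes "the projection $G\times K\to K$ is c-closed for every Hausdorff $K$", which is the assertion. Alternatively (2) follows directly from Corollary \ref{c-compact filter char} by the same re-presentation trick, since a g-filter on $S$ is a g-filter for $S\to 1$.

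The only delicate point is making sure that substituting the Hausdorff group $Q$ from Lemma \ref{hausdorff lemma} for the group $P$ in the argument of Theorem \ref{c-proper filter char} is legitimate — i.e., that $\mathcal F$ really is presented by $(\psi,k,y)$ with $f|_S = k\cdot\psi$ and base $\{\psi^{-1}(W)\mid W\in\mathcal N(y)\}$, and that $\psi(S)$ may be taken dense in $Q$ (which holds since $\psi(S)=\pi_P(\varphi(S))$ is dense in $Q=\pi_P(P)$ when $\varphi(S)$ is dense in $P$). Granting that, the cluster-point extraction is verbatim the computation in the proof of Theorem \ref{c-proper filter char}(i)$\Rightarrow$(ii), now harmlessly using only $f\times 1_Q$. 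I expect this to be the main (and essentially only) obstacle, and it is a routine bookkeeping matter rather than a genuine difficulty; the rest is immediate from the results already established. We record the short argument:

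\begin{proof}
(1) If $f$ is c-proper, then $f\times 1_K$ is c-proper, hence c-closed, for every $K$ by Corollary \ref{necessary condition}. Conversely, suppose $f\times 1_K$ is c-closed for every Hausdorff $K$, and let $\mathcal F$ be a g-filter for $f|_S$, where $S\le G$. By Lemma \ref{hausdorff lemma} (applied to $f|_S$, whose codomain $H$ is Hausdorff) there are a Hausdorff group $Q$, a homomorphism $\psi:S\to Q$ and a continuous homomorphism $k:Q\to H$ with $f|_S=k\cdot\psi$ and $\mathcal F$ generated by $\{\psi^{-1}(W)\mid W\in\mathcal N(y)\}$ for some $y\in Q$; moreover $\psi(S)$ is dense in $Q$ by Remark \ref{remark g-filter}(2). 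Now repeat the argument of the implication (i)$\Rightarrow$(ii) in the proof of Theorem \ref{c-proper filter char}, with $Q$ in place of $P$, $\psi$ in place of $\varphi$, $k$ in place of $r$, and $y$ in place of $x$: the only hypothesis it uses is c-closedness of $f\times 1_Q$, which holds since $Q$ is Hausdorff. Hence $\mathcal F$ has a cluster point in $G$. By Theorem \ref{c-proper filter char}, $f$ is c-proper.

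(2) Apply (1) with $H=1$: since the trivial group is Hausdorff, $f:G\to 1$ is c-proper — equivalently, $G$ is c-compact — if and only if $f\times 1_K$ is c-closed for every Hausdorff $K$, i.e., if and only if the projection $G\times K\to K$ is c-closed for every Hausdorff $K$.
\end{proof}
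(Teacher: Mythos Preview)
Your proof is correct and follows essentially the same approach as the paper: reduce to condition (ii) of Theorem \ref{c-proper filter char}, invoke Lemma \ref{hausdorff lemma} to replace the intermediate group $P$ by a Hausdorff $Q$, and then rerun the (i)$\Rightarrow$(ii) argument verbatim; part (2) is the specialization $H=1$. The paper's proof is more terse but makes exactly these moves.
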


\begin{proof}
(1) Assuming that $f\times 1_K$ is c-closed for every Hausdorff group $K$, we must show that $f$ is c-proper, and for that it suffices to verify condition (ii) of Theorem \ref{c-proper filter char}. In doing so, one may proceed as in the proof of (i)$\Rightarrow$(ii) of Theorem \ref{c-proper filter char} since, in keeping with the notation of that proof, the group $P$ may be assumed to be Hausdorff, by Lemma \ref{hausdorff lemma}.

(2) follows from (1): consider $H=1$.
\end{proof}

\begin{remark} \label{discrete c-compact}
(1) Corollary \ref{equivalence with DU} confirms the equivalence of our use of ``c-compact" with that of \cite{DU, L} for a Hausdorff groups G. For similar reasons, our use of ``h-complete" is equivalent to that of \cite{DU, L}.

(2) For $f: G\to H$ with $G$ discrete, trivially every g-filter for $f$ is actually an og-filter for $f$. By Theorem \ref{c-proper filter char} and Proposition \ref{h-complete filter char} we obtain that $f$ is c-proper if, and only if, $f|_{S}: S \to H$ is h-complete, for every subgroup $S \leq G$.
At the object level, we derive Theorem 5.3 of \cite{DU} as an immediate corollary: \em{A discrete group $G$ is c-compact  if, and only if, every subgroup $S \leq G$ is h-complete}.
\end{remark}

\begin{question}
Do Theorem {\rm{\ref{c-proper filter char}}}, Proposition {\rm{\ref{h-complete filter char}}} and Corollary {\rm{\ref{equivalence with DU}}} remain valid without the assumption that the codomain $H$ of the map $f$ be Hausdorff?
\end{question}

\section{Tychnoff type theorems for c-proper and h-complete maps}

In analogy to the Frol\'ik-Bourbaki result for (topologically) proper maps, the filter characterizations for c-proper and h-complete maps allow us to derive quite easily the stability of these properties under taking direct products, provided that the target categories are all Hausdorff.

\begin{theorem}\label{Frolik}
For a family  $(f_{i}: G_{i} \rightarrow H_{i})_{i \in I}$ of c-proper (c-perfect; h-complete) maps with every $H_i$ Hausdorff, also their product $\prod_{i\in I} f_{i}: \prod G_{i} \rightarrow \prod H_{i}$ is c-proper (c-perfect; h-complete, respectively).
\end{theorem}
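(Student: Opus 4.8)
The plan is to exploit the filter characterizations of Section 4 and to reduce the product statement, in each of the three cases, to a statement about cluster points of suitable (o)g-filters on the product group. Since all the $H_i$ are Hausdorff, so is $H:=\prod_{i\in I}H_i$, and thus the relevant criterion from Theorem \ref{c-proper filter char} (for c-proper), Proposition \ref{h-complete filter char} (for h-complete) applies to $f:=\prod_{i\in I}f_i:\prod G_i\to\prod H_i$. The c-perfect case will then follow for free: c-perfectness is c-properness plus fibrewise Hausdorffness, and the diagonal $\delta_f:\prod G_i\to (\prod G_i)\times_{\prod H_i}(\prod G_i)$ is (up to the canonical identification) the product of the diagonals $\delta_{f_i}$, each of which is closed by hypothesis; a product of closed embeddings is a closed embedding, so $\delta_f$ is closed and $f$ is Hausdorff.

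**The c-proper case.** Fix a subgroup $S\le\prod G_i$ and a g-filter $\mathcal F$ for $f|_S$; by Theorem \ref{c-proper filter char}(ii) it suffices to produce a cluster point of $\mathcal F$ in $\prod G_i$. Write $\mathcal F$ via $r:P\to H$, $\varphi:S\to P$ (a homomorphism, not necessarily continuous) and $x\in P$ with $r\cdot\varphi=f\cdot j$ (where $j:S\hookrightarrow\prod G_i$) and $\mathcal F$ based on $\{\varphi^{-1}(U):U\in\mathcal N(x)\}$; by Remark \ref{remark g-filter}(2) we may assume $\varphi(S)$ dense in $P$. For each $i\in I$, apply the projection $q_i:\prod H_i\to H_i$: since $H_i$ is Hausdorff, Proposition \ref{g-filter transfer} (with $N=H_i$, $\pi$ the composite $P\xrightarrow{} P/L_i$ coming from the projection of the product onto coordinate $i$, precisely the construction in that proof) yields that the image filter $\mathcal F_i:=(\mathrm{pr}_i\cdot\varphi)(\mathcal F)$ — equivalently, the filter induced on $S_i:=\mathrm{pr}_i(S)\le G_i$ — is a g-filter for $f_i|_{S_i}$. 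Since $f_i$ is c-proper, $\mathcal F_i$ has a cluster point $a_i\in G_i$. Set $a:=(a_i)_{i\in I}\in\prod G_i$. It remains to check that $a$ is a cluster point of $\mathcal F$: a basic neighbourhood of $a$ in $\prod G_i$ has the form $W=\prod_{i\in F}W_i\times\prod_{i\notin F}G_i$ with $F$ finite and $W_i\in\mathcal N(a_i)$, and one must show $W$ meets $\overline{\varphi^{-1}(U)}$ — equivalently $W\cap\varphi^{-1}(U)\ne\emptyset$ since a cluster point is in the closure — for every $U\in\mathcal N(x)$. This is exactly the place where the compatibility of the finitely many coordinates has to be handled simultaneously, and it is the main obstacle of the proof.

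**Handling the main obstacle.** The standard device is to shrink: given $U\in\mathcal N(x)$ and the finite set $F$, for each $i\in F$ one has the coordinate g-filter $\mathcal F_i$ and the cluster point $a_i$, but to intersect all of them against a single $U$ one should first replace $\mathcal F$ by a maximal g-filter containing it (Corollary \ref{corollary maximal}), so that by Theorem \ref{c-proper filter char}(iii) each image $\mathcal F_i$ is — after the same maximalization and using that maximality transfers under the quotient map of Proposition \ref{g-filter transfer} — a maximal g-filter, hence convergent; a convergent maximal g-filter on $S_i$ converges to $a_i\in G_i$, so for each $i\in F$ the set $(\mathrm{pr}_i\cdot\varphi)^{-1}(\text{nbhd of }a_i)$ lies in $\mathcal F$. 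Taking $V\in\mathcal N(x)$ with $xV\cdot V^{-1}$-type control so that $\varphi^{-1}(V)\subseteq\bigcap_{i\in F}(\mathrm{pr}_i\cdot\varphi)^{-1}(W_i)\cap\varphi^{-1}(U)$ — which is possible because each of the finitely many sets on the right is in the filter $\mathcal F$ and $\mathcal F$ has the neighbourhoods of $x$ as a base — we get $\varphi^{-1}(V)\ne\emptyset$ (properness of $\mathcal F$) and any point in it lies in $W\cap\varphi^{-1}(U)$. Hence $a$ is a cluster point of $\mathcal F$, and $f$ is c-proper.

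**The h-complete case.** This is formally the same argument but cleaner: by Proposition \ref{h-complete filter char} it suffices to show every og-filter $\mathcal F$ for $f$ (no subgroup $S$ needed, and now $\varphi:\prod G_i\to P$ is continuous) has a cluster point. Project to each coordinate: since $\pi$ in Proposition \ref{g-filter transfer} can be taken a topological quotient map in this situation (the quotient $P\to P/L_i$ is open, and continuity of $\varphi$ is preserved), each image $\mathcal F_i$ is an og-filter for $f_i$, which has a cluster point $a_i\in G_i$ by h-completeness of $f_i$; assemble $a=(a_i)$ and verify the cluster-point property exactly as above (using Corollary \ref{corollary maximal} and the convergence of maximal og-filters, Proposition \ref{h-complete filter char}(iii), to control finitely many coordinates at once). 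I expect the bookkeeping in Proposition \ref{g-filter transfer} — confirming that the coordinate projection really does fit its hypotheses, in particular that $N=H_i$ is Hausdorff and $\pi$ surjective onto $\mathrm{pr}_i(P)$ — to be the only genuinely delicate point besides the simultaneous-coordinates argument already discussed; everything else is routine.
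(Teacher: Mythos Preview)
Your approach is correct and essentially the same as the paper's---filter characterizations plus Proposition~\ref{g-filter transfer} to project to coordinates---but you make it harder than necessary. The paper starts directly with criterion~(iii) of Theorem~\ref{c-proper filter char} (respectively Proposition~\ref{h-complete filter char}): take a \emph{maximal} (o)g-filter $\mathcal F$ for $f|_S$; then by Proposition~\ref{g-filter transfer} each projection $\mathrm{pr}_i(\mathcal F)$ is already a maximal (o)g-filter for $f_i|_{\mathrm{pr}_i(S)}$ and hence \emph{converges} to some $a_i\in G_i$, and convergence of $\mathcal F$ to $(a_i)$ is then immediate from the product topology (a basic neighbourhood of $(a_i)$ is a finite intersection of sets $\mathrm{pr}_i^{-1}(W_i)\cap S$, each already in $\mathcal F$). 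Your ``main obstacle'' of assembling mere cluster points simultaneously over a finite set of coordinates disappears, since convergence---not just clustering---is available at every coordinate from the outset. Two minor notational points: the composite $\mathrm{pr}_i\cdot\varphi$ you write is ill-typed ($\varphi$ lands in $P$, not in $\prod G_i$; you mean $\mathrm{pr}_i|_S$), and in the h-complete case the map $\pi$ of Proposition~\ref{g-filter transfer} that must be a quotient is the coordinate projection $\mathrm{pr}_i:\prod G_j\to G_i$ itself (which is open), not the internal map $P\to P/L_i$ from that proposition's proof.
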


\begin{proof}
To prove the stability of c-propriety, one may use (iii) of Theorem \ref{c-proper filter char} and consider a maximal g-filter
 $\mathcal F$ on a subgroup $S\leq \prod G_i$. By Proposition \ref{g-filter transfer}, its projection onto $G_i$ is a maximal g-filter on the projection of $S$ which, by hypothesis, converges in $G_i$, for every $i\in I$, thus giving a convergence point for $\mathcal F$ in $\prod G_i$, as desired. Since Hausdorffness is trivially stable under products, the same assertion for c-perfectness follows. For stability of h-completeness under products, one proceeds analogously, using Proposition \ref{h-complete filter char}.
\end{proof}

The specialization $H_i=1$ leads to the following ``Hausdorff-free" Tychonoff-style theorem that was proved in \cite{DU} for Hausdorff groups. As mentioned in Example 9.5 of \cite{CGT}, for arbitrary topological groups it may also be derived from Theorem 6.4 in \cite{CGT}, with a method that was developed in \cite{CT} for categorical closure operators.

\begin{corollary}\label{Tychnoff}
The direct product of any family of c-compact groups is c-compact. Likewise, h-completeness is preserved under taking direct products of topological groups.
\end{corollary}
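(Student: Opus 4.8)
The plan is to read off both assertions as the instance $H_i = 1$ for all $i \in I$ of Theorem \ref{Frolik}. First I would observe that a product $\prod_{i\in I} 1$ of trivial groups is again trivial, so that for a family $(G_i)_{i\in I}$ of topological groups the product of the unique maps $G_i \to 1$ is, up to the canonical isomorphism, just the unique map $\prod_{i\in I} G_i \to 1$. Hence, if every $G_i$ is c-compact, i.e.\ every $G_i \to 1$ is c-proper, then Theorem \ref{Frolik}, applied with the trivially Hausdorff groups $H_i = 1$, yields that $\prod_{i\in I} G_i \to 1$ is c-proper, which by Definition \ref{basic definition}(4) says precisely that $\prod_{i\in I} G_i$ is c-compact. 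The statement about h-completeness is obtained in exactly the same way, now using the h-complete clause of Theorem \ref{Frolik} together with Definition \ref{definition c-complete}(3).

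I do not expect any genuine obstacle, since the substantive work is already contained in the filter characterizations of Section 4 and in Theorem \ref{Frolik}. The only point deserving a word of care is that, for the map $\prod_{i\in I} G_i \to 1$, a g-filter (resp.\ og-filter) for this map in the sense of Definition \ref{definition g-filter}(1)--(2) is the same thing as a g-filter (resp.\ og-filter) on the group $\prod_{i\in I} G_i$ in the sense of Definition \ref{definition g-filter}(3); this is immediate, and it is exactly what makes the specialization of Theorem \ref{Frolik} legitimate. The corresponding object-level statements one lands on are recorded in Corollary \ref{c-compact filter char} and Corollary \ref{h-complete object filter char}.

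Finally, I would note the alternative route for the c-compactness assertion that bypasses the filter machinery and, incidentally, needs no Hausdorff hypothesis: equip ${\bf TopGrp}$ with the closure operator whose closed morphisms are the c-closed ones, so that c-compact groups are exactly the compact objects in the sense of \cite{CGT}; the categorical Tychonoff theorem (Theorem 6.4 of \cite{CGT}, via the method developed in \cite{CT}) then gives that the product of compact objects is compact. This is the derivation indicated in Example 9.5 of \cite{CGT}; it applies here without the restriction that codomains be Hausdorff, although for the present statement that restriction is vacuous anyway since all codomains equal $1$.
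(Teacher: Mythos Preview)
Your proposal is correct and matches the paper's own derivation: the corollary is obtained by specializing Theorem~\ref{Frolik} to $H_i=1$ (which is trivially Hausdorff), and the paper likewise notes the alternative categorical route via Example~9.5 of \cite{CGT}. Your second paragraph about g-filters, while accurate, is unnecessary here since Theorem~\ref{Frolik} is already stated at the morphism level and the specialization is immediate.
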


This leaves us with the question whether the assumption of Hausdorffness for all $H_i \,(i\in I)$ is essential in Theorem \ref{Frolik}. Let us first point out that for finite $I$ one can prove the following statement, using the more elementary methods that were similarly applied in the proof of Prop. 4.9 in \cite{L}.

\begin{proposition}
If $f_1:G_1\to H_1, f_2:G_2\to H_2$ are both c-proper or c-perfect, then $f_1\times f_2: G_1\times G_2 \to H_1\times H_2$ has the respective property as well. Also h-completeness is preserved in this way, as long as {\rm at least one} of $H_1, H_2$ is Hausdorff.
\end{proposition}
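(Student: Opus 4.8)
The plan is to dispatch the c-proper and c-perfect cases with the stability results of Section 2 and to give a direct argument in the spirit of \cite[Prop.~4.9]{L} for the h-complete case, where the composition trick is unavailable because (F1) fails (Corollary \ref{non-compositivity}). For c-propriety I would factor
\[
G_1\times G_2 \xrightarrow{f_1\times 1_{G_2}} H_1\times G_2 \xrightarrow{1_{H_1}\times f_2} H_1\times H_2 ,
\]
noting that $f_1\times 1_{G_2}$ is c-proper by Corollary \ref{necessary condition} and that $1_{H_1}\times f_2$ is c-proper as well, since up to the obvious swap isomorphisms it equals $f_2\times 1_{H_1}$, again c-proper by Corollary \ref{necessary condition}; hence $f_1\times f_2$ is c-proper by (F1) of Proposition \ref{c-proper BSP}. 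For c-perfectness it remains to check that $f_1\times f_2$ is Hausdorff: under the canonical identification $(G_1\times G_2)\times_{H_1\times H_2}(G_1\times G_2)\cong(G_1\times_{H_1}G_1)\times(G_2\times_{H_2}G_2)$ the diagonal $\delta_{f_1\times f_2}$ becomes $\delta_{f_1}\times\delta_{f_2}$, and a product of closed embeddings is again a closed embedding.

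For the h-complete case, assume without loss of generality that $H_1$ is Hausdorff and consider a factorization $f_1\times f_2=k\cdot h$ with $k\colon K\to H_1\times H_2$ Hausdorff; the goal is that $h(G_1\times G_2)$ be closed in $K$. The first step is to check that $k_2:=\mathrm{pr}_2\cdot k\colon K\to H_2$ is Hausdorff: inside $K\times_{H_2}K$ the subgroup $K\times_{H_1\times H_2}K$ is the preimage of the diagonal of $H_1$ under the continuous map $(a,b)\mapsto(\mathrm{pr}_1 k(a),\mathrm{pr}_1 k(b))$, hence closed since $H_1$ is Hausdorff, so that the closed set $\delta_k(K)$ --- which as a subset of $K\times K$ coincides with $\delta_{k_2}(K)$ --- is closed in $K\times_{H_2}K$ as well. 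Restricting the factorization to the normal subgroup $\{e_{G_1}\}\times G_2$ and postcomposing with $\mathrm{pr}_2$ gives $f_2=k_2\cdot(h|_{\{e_{G_1}\}\times G_2})$, so since $f_2$ is h-complete and $k_2$ Hausdorff, $N:=h(\{e_{G_1}\}\times G_2)$ is closed in $K$; being normal in the dense subgroup $h(G_1\times G_2)$, it is normal in the closed subgroup $L:=\overline{h(G_1\times G_2)}$.

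Next I would form the quotient map $q\colon L\to L/N$, which is Hausdorff because $N$ is closed in $L$. Since $\mathrm{pr}_1\cdot k$ carries $N$ to $\{e_{H_1}\}$, it induces a continuous homomorphism $\bar k_1\colon L/N\to H_1$ with $\bar k_1\cdot q=(\mathrm{pr}_1\cdot k)|_L$; precomposing this identity with $h$ (viewed as a map into $L$) and with the inclusion $j\colon G_1\hookrightarrow G_1\times G_2$, $g\mapsto(g,e_{G_2})$, gives $f_1=\bar k_1\cdot\ell$ with $\ell:=q\cdot h\cdot j$. Because $\bar k_1$ has Hausdorff domain it is a Hausdorff map, so h-completeness of $f_1$ forces $\ell(G_1)$ to be closed in $L/N$. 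But $\ell(G_1)=q(h(G_1\times G_2))$, and $h(G_1\times G_2)$, containing $N$ and therefore a union of cosets of $N$, equals $q^{-1}(\ell(G_1))$; hence it is closed in $L$, and so closed in $K$, as required.

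The step I expect to be the main obstacle is the h-complete case: the failure of (F1) rules out the clean factorization argument used for c-proper and c-perfect maps, and one must instead build $L/N$ by hand. The single Hausdorffness hypothesis enters exactly once, but essentially --- it is what makes $\mathrm{pr}_2\cdot k$ (or $\mathrm{pr}_1\cdot k$, if instead $H_2$ is the Hausdorff factor) a Hausdorff map, which is precisely what is needed in order to apply h-completeness of the other factor and conclude that $N$ is closed. Everything else is routine bookkeeping with closed and normal subgroups and with quotient groups.
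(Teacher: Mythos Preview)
Your proof is correct and follows essentially the same route as the paper's: the c-proper case is handled by the identical factorization $(1_{H_1}\times f_2)\cdot(f_1\times 1_{G_2})$, and the h-complete case proceeds by the same two-step quotient construction --- first using h-completeness of $f_2$ (together with Hausdorffness of $\mathrm{pr}_2\cdot k$, which the paper obtains from $q_2$ being Hausdorff) to get $N$ closed, then forming the Hausdorff quotient by $N$ and applying h-completeness of $f_1$. The only cosmetic difference is that the paper assumes at the outset, without loss of generality, that $h$ has dense image in $K$, whereas you keep $K$ arbitrary and pass explicitly to $L=\overline{h(G_1\times G_2)}$.
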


\begin{proof}
Since $f_1\times f_2$ is the composite $(1_{H_1}\times f_2)\cdot (f_1\times 1_{G_2})$ of two c-proper maps when $f_1, f_2$ are both c-proper, the first statement is obvious. But since h-completeness is not closed under composition (see Remark \ref{second remark}(6)), it is clear that one has to argue quite differently to validate the second statement.

Let $p_i, q_i$ denote the product projections onto $G_i, H_i$ respectively,
and $s_i:G_i \to G_1\times G_2$ is the injection with $p_i\cdot s_i=1_{G_i}$, for $i=1,2$. With the hypothesis that $H_1$ be Hausdorff, we consider $h:G_1\times G_2\to K$ and $k:K\to H_1\times H_2$ with $k\cdot h=f_1\times f_2$ and $k$ Hausdorff, and without loss of generality we may assume $\overline{h(G_1\times G_2)}=K$. Then, since $s_2(G_2)$ is normal in $G_1\times G_2$, $L:=h(s_2(G_2))$ is normal in $K$. As $H_1$ is Hausdorff, so is $q_2$, and since $(q_2\cdot k)\cdot (h\cdot s_2)=f_2$, h-completeness of $f_2$ guarantees that $L$ be closed in $K$ and the quotient $K/L$ be Hausdorff.
Since $q_1(k(L))$ is trivial, we obtain $l:K/L\to H_1$ with $l\cdot \pi =q_1\cdot k$, where $\pi : K\to K/L$ is the projection. Since $l\cdot (\pi \cdot h \cdot s_1)=f_1$ with $l$ Hausdorff, h-completeness of $f_1$ makes
$\pi (h(s_1(G_1)))$ closed in $K/L$ and therefore $\pi^{-1}(\pi(h(s_1(G_1))))=h(s_1(G_1))\cdot h(s_2(G_2))= h(G_1\times G_2)$ closed in $K$.
\end{proof}

Also for infinite products of c-proper maps we can, in fact, abandon the Hausdorff restriction for the codomains altogether, by resorting to an argumentation that substantially differs from the proof of Theorem \ref{Frolik}; it extends
the finite stability via transfinite induction -- see \cite{CT}. This method was already transferred from the object to
the morphism level in Theorem 6.6 of \cite{CGT}, which requires the ambient category $\mathcal X$ to be complete and equipped
with a proper factorization system $(\mathcal E, \mathcal M)$ such that  $\mathcal E$ is a so-called projectivity class,
that is: there must be a class $\mathcal P$ of objects in $\mathcal X$ such that $\mathcal E$ is precisely the class of morphisms $f:X\to Y$ such that every object in $\mathcal P$ is projective with respect to $f$: every $y:P\to Y$
with $P\in \mathcal P$ factors as $f\cdot x=y$, for some $x:P\to X$. Furthermore, there must be a hereditary
closure operator $c=(c_X)_{X\in {\mathcal X}}$ with respect to $\mathcal M$-subobjects (in the sense of \cite {DG}) that satisfies the {\em finite structure property of products (FSPP)}, as introduced in \cite{clop}: given a product
$X= {\prod}_{i\in I} X_i$ in $\mathcal X$, the $c$-closure of an $\mathcal M$-subobject $m$ of $X$ may be computed as

$$c_X(m)=\bigwedge_{F\subseteq I \,{\rm finite}}{\pi}_F^{-1}(c_{X_F}(\pi_F(m))),$$
where $\pi_F:X\to X_F:=\prod_{i\in F}X_i$ is the generalized projection and $\pi_F(m)$ the image of $m$ according to the given factorization system.

Now, considering $\mathcal X={\bf TopGrp}$ with $(\mathcal E, \mathcal M)$ the (surjective, embedding)-factorization system and $c$ the usual Kuratowski closure operator, one notes that, of course, $\mathcal X$ is complete (as a topological category over {\bf Grp}), that $\mathcal E$ is a projectivity class (choose $\mathcal P=\{\mathbb Z\}$) and that $c$ satisfies FSPP since the the Kuratowski closure operator satisfies it in {\bf Top}.
Consequently, Theorem 6.6 of \cite{CGT} gives the following strengthening of Theorem \ref{Frolik}:

\begin{corollary}\label{Frolik categorical}
Products of c-proper morphisms of (not necessarily Hausdorff) topological groups are c-proper.
\end{corollary}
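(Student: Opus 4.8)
The plan is to verify the hypotheses of Theorem 6.6 of \cite{CGT} for the ambient category $\mathcal X={\bf TopGrp}$ and then invoke that theorem directly; the bulk of the work is a checklist, and essentially all the nontrivial inputs have already been collected in the paragraph preceding the statement. First I would record that ${\bf TopGrp}$ is complete: it is a topological category over ${\bf Grp}$, which is complete, so limits are formed on underlying groups and endowed with the initial topology. Next I would fix the factorization system $(\mathcal E,\mathcal M)$ where $\mathcal E$ consists of the surjective homomorphisms and $\mathcal M$ of the (topological) embeddings; this is a proper, orthogonal factorization system on ${\bf TopGrp}$, with $\mathcal M$-subobjects being exactly the subgroups carrying the subspace topology. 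I would then check that $\mathcal E$ is a projectivity class by taking $\mathcal P=\{\mathbb Z\}$: a homomorphism $f:G\to H$ is surjective if and only if $\mathbb Z$ is projective with respect to it, since picking a preimage of a generator's image amounts exactly to lifting an arbitrary $y:\mathbb Z\to H$ through $f$; conversely lifting all such $y$ forces surjectivity.

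The remaining ingredient is the closure operator. I would take $c$ to be the Kuratowski closure operator, which is a hereditary closure operator with respect to $\mathcal M$-subobjects in ${\bf TopGrp}$: for a subgroup $A\le G$ its $c$-closure is the topological closure $\overline A$ (again a subgroup), and heredity — $c_S(A)=S\cap c_G(A)$ for $A\le S\le G$ — is just the statement that closure in the subspace $S$ is obtained by intersecting with closure in $G$. Finally I would verify the finite structure property of products (FSPP): given $X=\prod_{i\in I}X_i$ in ${\bf TopGrp}$ and a subgroup $m\le X$, the identity
\[
c_X(m)=\bigwedge_{F\subseteq I\ \text{finite}}\pi_F^{-1}\bigl(c_{X_F}(\pi_F(m))\bigr)
\]
holds because the underlying topological product carries the product topology, and FSPP for the Kuratowski closure operator in ${\bf Top}$ is classical: a point lies in the closure of $m$ iff every basic open neighbourhood — which depends on only finitely many coordinates — meets $m$, which is precisely membership in every $\pi_F^{-1}(c_{X_F}(\pi_F(m)))$. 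Since images and preimages along the generalized projections $\pi_F$ are computed on underlying spaces, the ${\bf Top}$-level identity transports verbatim to ${\bf TopGrp}$.

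With all hypotheses in place, Theorem 6.6 of \cite{CGT} yields that the class of stably $c$-closed $\mathcal M$-morphisms — equivalently, by the categorical counterpart of Definition \ref{basic definition}(2), the class of c-proper morphisms of topological groups — is closed under arbitrary products, which is the assertion. The only point requiring genuine care is the FSPP verification: one must make sure that the abstract image $\pi_F(m)$ in the factorization system $(\mathcal E,\mathcal M)$ agrees with the set-theoretic image subgroup carrying the subspace topology (it does, since surjective-followed-by-embedding factorizations in ${\bf TopGrp}$ are computed on underlying sets), so that the categorical formula coincides with the familiar topological one; granting that identification, every clause of the checklist is routine, and the result follows.
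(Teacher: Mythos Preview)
Your proposal is correct and follows exactly the approach the paper takes: the paragraph preceding the corollary is itself the proof, and it consists of precisely the checklist you carry out (completeness of ${\bf TopGrp}$, the (surjective, embedding) factorization system, $\mathcal E$ as the projectivity class for $\mathcal P=\{\mathbb Z\}$, and FSPP for the Kuratowski closure inherited from ${\bf Top}$), followed by an appeal to Theorem~6.6 of \cite{CGT}. Your write-up simply supplies more detail at each step---particularly the point about the abstract $(\mathcal E,\mathcal M)$-image $\pi_F(m)$ agreeing with the set-theoretic image subgroup---but the argument is the same.
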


{\em Acknowledgement}. We thank G\'abor Luk\'acs for his very helpful comments on an earlier version of this paper.

\end{document}